\begin{document}

\newtheorem{lemma}{Lemma}[section]
\newtheorem{corollary}{Corollary}[section]
\newtheorem{proposition}{Proposition}
\newtheorem{theorem}{Theorem}[section]
\newtheorem{statement}{Statement}[section]
\newtheorem{definition}{Definition}[section]
\newtheorem{note}{Note}[section]
\newtheorem{claim}{Claim}[section]

\newcommand {\C}{{\mathbb C}}
\newcommand {\D}{{\mathbb D}}
\newcommand{\De}{{\Delta}}
\newcommand{\de}{{\delta}}
\newcommand {\ra} {\rightarrow}
\newcommand{\la}{{\lambda}}
\newcommand{\La}{{\Lambda}}
\newcommand{\si}{{\sigma}}
\newcommand{\Si}{{\Sigma}}
\newcommand{\Om}{{\Omega}}
\newcommand{\om}{{\omega}}
\newcommand{\cal}{\mathcal}

\newcommand{\tl}{\tilde}
\newcommand{\sm}{\setminus}
\newcommand{\inter}{\operatorname{int}}
\newcommand{\di}{\partial}

\newcommand{\AAA}{{\cal{A}}}
\newcommand{\FF}{{\cal {F}}}
\newcommand{\MM}{{\cal{M}}}
\newcommand{\TT}{{\cal{T}}}
\newcommand{\QC}{{\cal {QC}}}
\newcommand{\SSS}{{\cal{S}}}

\newcommand{\N}{{\Bbb{N}}}

\newcommand{\Ups}{{\Upsilon}}

\renewcommand{\Im}{\operatorname{Im}}
\newcommand{\diam}{\operatorname{diam}}

\title{Geometric proof of the $\lambda$-Lemma}
\maketitle

\begin{center}
Eric Bedford\footnote{bedford@indiana.edu, IMS, Stony Brook University, Stony Brook NY 11794-3660, USA} and  Tanya Firsova\footnote{tanyaf@math.ksu.edu, 138 Cardwell Hall, Manhattan, KS 66506, USA}
\end{center}

\begin{abstract} We give a geometric approach to the proof of the $\lambda$-lemma. In particular, we point out the role pseudoconvexity plays in the proof.

Nous donnons une approche g\'{e}om\'{e}trique de 
la preuve de la $\lambda$-lemma. En particulier, nous soulignons le r\'{o}le
pseudoconvexity joue dans la preuve.
\end{abstract}

\section{Introduction}

A holomorphic motion in dimension one is a family of injections $f_{\lambda}:A\to \hat{\mathbb C}$ over a complex manifold $\Lambda\ni\lambda$.  Holomorphic motions first appeared in \cite{MSS,L} where they were used to show that a generic rational map $f:\hat{\mathbb C}\to\hat{\mathbb C}$ is structurally stable.  This notion has since found numerous applications in holomorphic dynamics and  Teichm\"{u}ller Theory. 
Its usefulness comes from the fact that analyticity alone forces strong extendibility and regularity properties that are referred to as the $\lambda$-lemma. Let $\Delta$ be the unit disk in $\mathbb C$. 

\begin{theorem}$ $
\begin{itemize}
\item {\bf Extension $\lambda$-lemma} \cite{L}, \cite{MSS} Any holomorphic motion $f:\Delta\times A \to \hat\C$ extends to a holomorphic motion  $\Delta\times \bar A \ra \hat \C$. 

\item {\bf QC $\lambda$-lemma} \cite{MSS} The map $f(\lambda,a)$ is uniformly quasisymmetric in $a$.
\end{itemize}
\end{theorem}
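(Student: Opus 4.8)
The plan is to reduce both assertions to a single quantitative estimate coming from the Schwarz--Pick lemma, making the role of pseudoconvexity explicit through the hyperbolicity of the fibers. First I would normalize: after post-composing $f$ with a holomorphically varying family of M\"obius maps $M_\lambda$ (which changes neither the quasisymmetry type nor the existence of an extension, since M\"obius maps are uniformly quasisymmetric and holomorphic in their coefficients), I may assume that $A\supseteq\{0,1,\infty\}$ and that $f_\lambda$ fixes $0,1,\infty$ for every $\lambda$. The three reference graphs then become the trivial sections $\Delta\times\{0\}$, $\Delta\times\{1\}$, $\Delta\times\{\infty\}$, and every other graph $\gamma_a=\{(\lambda,f_\lambda(a))\}$ lies in the domain $\Omega=\Delta\times X$, where $X=\hat{\mathbb C}\setminus\{0,1,\infty\}$ is the thrice-punctured sphere. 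The point I want to stress is that $\Omega$, as the complement of the three reference graphs, is pseudoconvex and, having hyperbolic fibers, is complete Kobayashi hyperbolic; this is precisely the structural input that powers the estimates below.

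The fundamental estimate comes from cross-ratios. For any four distinct points $a,b,c,d\in A$ set $\chi(\lambda)=\mathrm{cr}(f_\lambda(a),f_\lambda(b),f_\lambda(c),f_\lambda(d))$. Injectivity of $f_\lambda$ keeps the four images distinct, so $\chi$ omits $0,1,\infty$ and defines a holomorphic map $\chi:\Delta\to X$ with $\chi(0)=\mathrm{cr}(a,b,c,d)$. Schwarz--Pick (the distance-decreasing property of the Kobayashi/hyperbolic metric) then yields
\[
\rho_X\bigl(\chi(\lambda),\mathrm{cr}(a,b,c,d)\bigr)\le \rho_\Delta(0,\lambda)=\tfrac{1}{2}\log\tfrac{1+|\lambda|}{1-|\lambda|}.
\]
This one inequality, uniform in the choice of the four points, is the geometric core of the whole argument.

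The QC $\lambda$-lemma is now immediate. The displayed bound says that on each disk $\{|\lambda|\le r\}$ every cross-ratio is distorted by a hyperbolic amount at most $\rho_\Delta(0,r)$, with a bound independent of $a,b,c,d$. Bounded distortion of cross-ratios is exactly the statement that $a\mapsto f_\lambda(a)$ is $K(r)$-quasisymmetric, with $K(r)$ depending only on $r$ and tending to $1$ as $r\to0$; this gives uniform quasisymmetry in $a$.

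For the extension $\lambda$-lemma I would fix three reference points $b,c,d\in A$ and, given $a_0\in\bar A$, choose $a_n\to a_0$ in $A$. The maps $\chi_n(\lambda)=\mathrm{cr}(f_\lambda(a_n),f_\lambda(b),f_\lambda(c),f_\lambda(d))$ take values in $X$, hence form a normal family by Montel; since $\chi_n(0)\to\mathrm{cr}(a_0,b,c,d)$, a subsequence converges, and reconstructing the fourth point from its cross-ratio with the moving triple one obtains $f_\lambda(a_n)\to g(\lambda)$ with $g$ holomorphic in $\lambda$. I would then \emph{define} $f_\lambda(a_0):=g(\lambda)$. The hard part is to show that this is well defined and injective on $\bar A$, and here the cross-ratio trick does the work: for distinct $a_0,a_0'\in\bar A$ the function $\psi(\lambda)=\mathrm{cr}(f_\lambda(a_0),f_\lambda(a_0'),f_\lambda(c),f_\lambda(d))$ is a limit of $X$-valued holomorphic maps, so by Hurwitz it is either $X$-valued or constantly equal to a puncture; since $\psi(0)=\mathrm{cr}(a_0,a_0',c,d)\in X$ it must be $X$-valued, whence $f_\lambda(a_0)\ne f_\lambda(a_0')$ for all $\lambda$, and the limit is independent of the approximating sequence. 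This Hurwitz/normal-families step---ensuring that distinct boundary graphs cannot collide and that coincident limits must agree---is the main obstacle, and it is exactly where completeness of the hyperbolic metric (equivalently, the pseudoconvexity of $\Omega$) is essential: it is what forbids the approximating graphs from escaping to the reference graphs or crossing faster than the Schwarz--Pick bound allows.
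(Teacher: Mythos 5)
The paper never proves this theorem: it is quoted as classical background from \cite{L} and \cite{MSS}, and the paper's own machinery (graphical tori, disks trapped by pseudoconvex hypersurfaces in $\mathbb C^2$, the Filling Theorem) is aimed at the stronger S\l{}odkowski extension theorem, not at this statement. So there is no in-paper proof to match your argument against; judged on its own, your proposal is essentially the classical Ma\~n\'e--Sad--Sullivan/Lyubich argument and is correct in outline: normalize three points, observe that cross-ratios of distinct orbits give holomorphic maps $\Delta\to\hat{\mathbb C}\setminus\{0,1,\infty\}$, and extract everything from the Schwarz--Pick inequality for the complete hyperbolic metric on the thrice-punctured sphere together with Montel and Hurwitz. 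Two details deserve flagging. First, the step ``bounded hyperbolic distortion of cross-ratios implies $K(r)$-quasisymmetry'' is the quasi-M\"obius/quasisymmetric equivalence; it is true but is itself a standard lemma rather than a tautology, and in \cite{MSS} the estimate is instead run directly on the ratio $\bigl(f_\lambda(a)-f_\lambda(c)\bigr)/\bigl(f_\lambda(b)-f_\lambda(c)\bigr)$. Second, well-definedness of the boundary extension needs the Hurwitz dichotomy in both directions --- two subsequential limits over sequences tending to the same $a_0$ give a cross-ratio that equals a puncture at $\lambda=0$, hence is identically that puncture, so the limits agree --- and you do gesture at this; a cleaner route is to note that Schwarz--Pick plus completeness of the hyperbolic metric yields equicontinuity of $a\mapsto f_\lambda(a)$ uniformly on $|\lambda|\le r$, from which the continuous injective extension to $\bar A$ is immediate. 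Finally, the appeal to pseudoconvexity of $\Delta\times X$ is decorative here: the classical proof is purely one-variable, and pseudoconvexity in $\mathbb C^2$ is the engine of this paper's proof of the stronger S\l{}odkowski theorem, not of the extension and QC lemmas.
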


Note that when $A$ has interior, $f(\lambda, a)$ is quasiconformal on the interior. For many applications it is important to know that a holomorphic motion can be extended to a holomorphic motion of the entire sphere.
Bers \& Royden \cite{BR} and Sullivan \& Thurston
\cite{ST} proved that there exists a universal $\delta>0$ such
that under the circumstances of the Extension $\lambda$-lemma, the
restriction of $f$ to the parameter disk $\Delta_{\delta}$ of
radius $\delta$ can be extended to a holomorphic motion
$\Delta_{\delta} \times \hat {{\mathbb C}} \mapsto \hat{\mathbb
C}$. S\l{}odkowski 
\cite{Slodkowski} proved the strongest version asserting that $\delta$ is
actually equal to $1$:

\newtheorem*{Slod lemma}{$\lambda$-lemma [S\l{o}dkowski]}
\begin{Slod lemma}
Let  $A\subset \hat{\mathbb C}$. Any holomorphic motion
$f:\Delta\times A \to \hat{\mathbb C}$ extends to a holomorphic
motion $\Delta\times \hat{\mathbb C} \mapsto \hat{\mathbb C}$.
\end{Slod lemma}

S\l{}odkowski's proof builds on the work by Forstneri\v{c} \cite{Forstneric} and \v{S}nirel'man  \cite{Snirelman}.
Astala and Martin \cite{AM} gave an exposition of S\l{}odkowski's proof from the point of view of $1$-dimensional complex analysis. Chirka \cite{ChirkaOka} gave an independent proof using solution to $\bar{\partial}$-equation. (See \cite{Teich} for a detailed exposition of Chirka's proof.) The purpose of this paper is to give a more geometric approach to the proof of the $\lambda$-lemma. 
We take S\l{}odkowski's approach and replace the major technical part in his proof (closedness, see \cite[Theorem 4.1]{AM}) by a geometric pseudoconvexity argument.

The strongest $\lambda$-lemma fails when the dimension of the base
manifold is greater than $1$, even if the base is topologically contractible. This follows from the results of Earl-Kra \cite{EK} and Hubbard \cite{Hu}.

We give the necessary background on holomorphic motions, pseudoconvexity and Hilbert transform in Section \ref{sec:background}. In Section \ref{sec:axiom_of_choice}, we show that the $\lambda$-lemma when $A$ is finite implies the $\lambda$-lemma for arbitrary $A$. We set up the notations and terminology in Section \ref{sec:terminology}. We state the filling theorem for the torus, and explain how it implies the finite $\lambda$-lemma in Section \ref{sec:filling_theorem}. In Section \ref{sec:trapping_disks} we prove H\"{o}lder estimates for disks trapped inside pseudoconvex domains and construct such trapping pseudoconvex domains for ``graphical tori''. We use these estimates to prove the filling theorem in Section \ref{sec:proof}.
\subsection{Acknowledgments} We would like to thank Misha Lyubich, Yakov Eliashberg and the referee for fruitful discussions and useful suggestions.

\section{Background}\label{sec:background}

\subsection{Holomorphic motion}


Let $\Delta$ be a unit disk. Let $A\subset \hat{\mathbb C}$. A {\it holomorphic motion} of  $A$ is a map
$f$: $\Delta\times A \to \hat{\mathbb C}$ such that
\begin{enumerate}
\item for fixed $a\in A$, the map $\lambda\mapsto f(\lambda,a)$ is holomorphic in $\Delta$
\item for fixed $\lambda\in \Delta$, the map $a\mapsto f(\lambda,a)=:f_{\lambda}(a)$ is an injection and
\item the map $f_0$ is the identity on $A$.
\end{enumerate}

\subsection{Pseudoconvexity}\label{sec:pseudo}

Below we give definitions that are sufficient for our purposes.

A $C^2$ smooth function is {\it (strictly) plurisubharmonic} (written (strictly) psh) if its restriction to every complex line is strictly subharmonic. In coordinates 
$z=(z_1,\dots,z_n)$, $u(z)$ is strictly psh if the matrix $\left(\frac{\partial^2 u}{\partial z_j\partial \bar{z}_k}\right)$ is positive definite.

A smoothly bounded domain $\Omega\subset \mathbb C^2$ is {\it strictly pseudoconvex} if there is a smooth, strictly psh
function $\rho$ in a neighborhood of $\bar{\Omega}$ such that $\{\Omega=\rho(z)<0\}.$

\begin{lemma}\label{lem:disk_trapping}Let $\Omega_s\subset \mathbb C^2$ be a
family of pseudoconvex domains with defining functions $\rho_s$,
$s\in[0,1]$. We assume that the family $\rho_s$ is continuous in
$s$. Let $\phi_s:\Delta\mapsto \mathbb C^2$ be a continuous family
of holomorphic non-constant functions that extend continuously to
$\bar{\Delta}$. Set $D_s:=\phi_s(\Delta)$. Suppose $\partial
D_s\subset
\partial \Omega_s$, $s\in [0,1]$. And suppose $D_s\subset
\Omega_s,$ $s\in [0,1)$. Then $D_1\subset \Omega_1$.
\end{lemma}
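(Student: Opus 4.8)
The plan is to reduce the geometry to the maximum principle for a single subharmonic function on the disk. For each $s$ I set $u_s := \rho_s\circ\phi_s$. Since the defining function $\rho_s$ is strictly plurisubharmonic (as in Section~\ref{sec:pseudo}) and $\phi_s$ is holomorphic, $u_s$ is subharmonic on $\Delta$, and in fact strictly subharmonic at every point where $\phi_s'\neq 0$; because $\phi_s$ is non-constant, such points form a dense open subset of $\Delta$. Each $u_s$ extends continuously to $\bar{\Delta}$, since $\phi_s$ does and $\rho_s$ is smooth near $\overline{\Omega_s}$.

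First I would rewrite the hypotheses in terms of $u_s$. The condition $D_s\subset\Omega_s$ for $s<1$ says exactly $u_s<0$ on $\Delta$, while $\partial D_s\subset\partial\Omega_s=\{\rho_s=0\}$ forces the boundary values $\phi_s(\partial\Delta)$ to lie in $\overline{\Omega_s}$, so that $u_s\le 0$ on $\partial\Delta$ (indeed $u_s=0$ on the part of $\partial\Delta$ sent to $\partial D_s$). Continuity of the families $\rho_s$ and $\phi_s$ guarantees that these relations pass to the limit: $\rho_1\circ\phi_1=\lim_{s\to 1}\rho_s\circ\phi_s$ uniformly on $\bar{\Delta}$, so $u_1\le 0$ on $\partial\Delta$. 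Applying the maximum principle to the subharmonic function $u_1$ with boundary values $\le 0$ then gives $u_1\le 0$ on all of $\Delta$, i.e. the weak containment $D_1\subset\overline{\Omega_1}$.

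The heart of the matter is to upgrade this to the strict containment $D_1\subset\Omega_1$, that is, to promote $u_1\le 0$ to $u_1<0$ on $\Delta$. I would argue by contradiction: suppose $u_1(\zeta_0)=0$ for some interior $\zeta_0\in\Delta$. Then $u_1$ attains its maximum value $0$ at an interior point, so the strong maximum principle for subharmonic functions forces $u_1\equiv 0$ on the connected set $\Delta$. This would mean $\phi_1(\Delta)\subset\{\rho_1=0\}=\partial\Omega_1$, i.e. a non-constant analytic disk lying entirely in the hypersurface $\partial\Omega_1$. This is impossible, since $u_1=\rho_1\circ\phi_1$ is strictly subharmonic wherever $\phi_1'\neq 0$ and hence cannot be identically constant. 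Therefore no such $\zeta_0$ exists, $u_1<0$ throughout $\Delta$, and $D_1\subset\Omega_1$, as claimed.

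I expect this last strict upgrade to be the main obstacle, and it is precisely here that \emph{strict} pseudoconvexity is essential: if the $\rho_s$ were only weakly plurisubharmonic, the limit disk could slip into the boundary. To see this I would point to the example $\rho_s(z_1,z_2)=\operatorname{Re} z_2+(1-s)(|z_1|^2-1)$ with the flat disks $\phi_s(\zeta)=(\zeta,0)$: here $u_s=(1-s)(|\zeta|^2-1)<0$ on $\Delta$ for $s<1$ with $\phi_s(\partial\Delta)\subset\partial\Omega_s$, yet $u_1\equiv 0$ and $D_1\subset\partial\Omega_1$. Thus the strict subharmonicity coming from strict plurisubharmonicity is exactly what rules out the degenerate case and closes the maximum-principle argument.
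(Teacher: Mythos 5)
Your proof is correct and follows essentially the same route as the paper's: restrict $\rho_s$ to the disk to get subharmonic functions $u_s=\rho_s\circ\phi_s\le 0$, pass to the limit to obtain $u_1\le 0$, and invoke the strong maximum principle to rule out an interior zero, since $u_1\equiv 0$ would put the non-constant disk $D_1$ inside $\partial\Omega_1$. You additionally spell out the step the paper leaves implicit (why $D_1\subset\partial\Omega_1$ is impossible, via strict subharmonicity of $u_1$ off the discrete zero set of $\phi_1'$) and give a counterexample showing strict pseudoconvexity is needed, both of which are correct.
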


\begin{proof} Consider the restriction of the functions $\rho_s$ to $D_s$. The functions $\rho_s\circ \phi_s:\Delta\mapsto \mathbb R$ are subharmonic functions, $\rho_1\circ \phi_1$ is the limit of $\rho_s\circ \phi_s$. By the hypothesis of the lemma, $\rho_s\circ \phi_s\leq 0$ on $\Delta$. Therefore,
$\rho_1\circ \phi_1\leq 0$. If the maximum value $0$ is attained
in the interior point, $\rho_1\circ \phi_1\equiv 0$. It implies
that $D_1\subset \partial \Omega_1$, which is impossible.
Therefore, $\rho_1\circ\phi_1<0$ on $\Delta$, and $D_1\subset
\Omega_1$.
\end{proof}

Let $M\subset \mathbb C^2$ be a real two-dimensional
manifold. We say that $p\in M$ is a {\it totally real} point if
$T_pM\cap iT_pM=\{0\}$. $M$ is a totally real manifold if all its
points are totally real. If the manifold $M$ is totally real, it is in fact homeomorphic to the torus (see \cite{Bishop} and \cite{GW}).
Assume $M\subset \partial \Omega$, then one can define a
characteristic field of directions on $M$.

Let $p\in M$. Let $H_p\partial \Omega:=T_p\Omega\cap iT_p\Omega$ be the holomorphic tangent space.
$\langle\xi_p\rangle:=H_p\partial \Omega\cap T_pM$ is called the {\it characteristic
direction}. We denote by $\chi(M, \Omega)$ the characteristic field of directions
(see \cite[Section 16.1]{CE}).

\subsection{Hilbert transform}

A function $u:\mathbb S^1\to \mathbb C$ is {\it H\"{o}lder continuous with exponent} $\alpha$
if there is a constant $A$ such that for all $x,y\in \mathbb S^1$:
$$|u(x)-u(y)|<A |x-y|^{\alpha}.$$

We will consider the space $C^{1,\alpha}(\mathbb S^1)$ of differentiable functions $u$ with $\alpha$- H\"{o}lder continuous derivative. The norm on the space $C^{1, \alpha}(\mathbb S^1)$ is defined by the formula:

$$||u||_{1, \alpha}:=\sup_{x\in \mathbb S^1}|u(x)|+\sup_{x\in \mathbb S^1}|u'(x)|+\sup_{x\neq y\in \mathbb S^1}\frac{|u'(x)-u'(y)|}{|x-y|^{\alpha}}.$$


There exists a unique harmonic extension $u_h$ of the function $u$ to $\Delta$. Let denote by $u_h^*$ the harmonic conjugate of $u_h$, normalized by the condition $u_h(0)=0$. The function $u_h^*$ extends to $\mathbb S^1=\partial \Delta$ as a H\"{o}lder continuous function with exponent
$\alpha$.

For a function $u\in C^{1, \alpha}(\mathbb S^1)$ we define its {\it Hilbert transform} $Hu$ to be the boundary value of the harmonic conjugate function $u_h^*$. By definition, the function $u+iHu$ 
extends as a holomorphic function to the unit disk.

\begin{theorem} The Hilbert transform $H$ is a bounded linear operator on $C^{1,\alpha}(\mathbb S^1)$ and $C^{\alpha}(\mathbb S^1)$.
\end{theorem}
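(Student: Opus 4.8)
The plan is to represent $H$ as a singular integral operator on the circle, estimate H\"{o}lder seminorms directly, and then reduce the $C^{1,\alpha}$ assertion to the $C^{\alpha}$ assertion by differentiation. Linearity of $H$ is immediate from its definition, so only boundedness is at issue, and throughout I assume $0<\alpha<1$. First I would record the kernel: expressing $u_h$ as a Poisson integral and passing to the boundary values of its conjugate yields the classical formula
$$ Hu(\theta)=\frac{1}{2\pi}\,\mathrm{p.v.}\int_{-\pi}^{\pi} u(\theta-t)\cot\tfrac{t}{2}\,dt . $$
Since $\cot(t/2)$ is odd, its principal value over $[-\pi,\pi]$ vanishes, so I may subtract $u(\theta)$ and rewrite $H$ as the absolutely convergent integral
$$ Hu(\theta)=\frac{1}{2\pi}\int_{-\pi}^{\pi}\bigl(u(\theta-t)-u(\theta)\bigr)\cot\tfrac{t}{2}\,dt . $$
For $u\in C^{\alpha}(\mathbb S^1)$ the integrand is $O(|t|^{\alpha-1})$ near $t=0$ and bounded elsewhere, so the integral converges; writing $[u]_\alpha:=\sup_{x\neq y}|u(x)-u(y)|/|x-y|^\alpha$, this also gives at once the sup bound $\|Hu\|_\infty\le C\,[u]_\alpha$, because $\int_{-\pi}^{\pi}|t|^{\alpha}\,|\cot(t/2)|\,dt<\infty$.

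To estimate $[Hu]_\alpha$, I would fix $\theta$ and a small $h$ and study $Hu(\theta)-Hu(\theta+h)$. Subtracting the two integrals naively produces a spurious logarithmic factor, so instead I would first translate the integration variable in the formula for $Hu(\theta+h)$ so that both integrals run against a kernel centred at the same point. After this substitution the constant terms cancel by oddness of $\cot$, leaving
$$ Hu(\theta)-Hu(\theta+h)=\frac{1}{2\pi}\,\mathrm{p.v.}\int_{-\pi}^{\pi}\bigl(u(\theta-t)-u(\theta)\bigr)\Bigl(\cot\tfrac{t}{2}-\cot\tfrac{t+h}{2}\Bigr)\,dt . $$
Now split into the near region $\{|t|\le 2h\}$ and the far region $\{2h<|t|\le\pi\}$. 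On the near region, using $|u(\theta-t)-u(\theta)|\le[u]_\alpha|t|^\alpha$ together with the cancellation inherent in the principal value at the two kernel singularities $t=0$ and $t=-h$, the contribution is $O([u]_\alpha h^\alpha)$. On the far region the mean value theorem gives $|\cot(t/2)-\cot((t+h)/2)|\le C\,h/t^{2}$, so that contribution is bounded by $C[u]_\alpha h\int_{2h}^{\pi}t^{\alpha-2}\,dt=O([u]_\alpha h^\alpha)$, where it is exactly the range $0<\alpha<1$ that makes both integrals yield the correct power $h^\alpha$. Combining these gives $[Hu]_\alpha\le C[u]_\alpha$, and with the sup bound this proves that $H\colon C^{\alpha}(\mathbb S^1)\to C^{\alpha}(\mathbb S^1)$ is bounded.

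The main obstacle is precisely this far-field cancellation: securing the decisive gain $h/t^{2}$ (rather than merely $1/t$) in the kernel difference is the crux of the Privalov-type argument, and the variable translation above is what makes the gain transparent; keeping the principal value to tame the residual singularity at $t=-h$ in the near region is the accompanying technical point.

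Finally I would deduce the $C^{1,\alpha}$ bound from the $C^{\alpha}$ bound. In terms of Fourier coefficients $H$ is the multiplier $\hat u(n)\mapsto -i\,\operatorname{sgn}(n)\,\hat u(n)$, which commutes with the differentiation multiplier $\hat u(n)\mapsto in\,\hat u(n)$; hence $(Hu)'=H(u')$. Consequently, if $u\in C^{1,\alpha}(\mathbb S^1)$ then $u'\in C^{\alpha}(\mathbb S^1)$, so $(Hu)'=H(u')\in C^{\alpha}(\mathbb S^1)$ with $\|(Hu)'\|_{C^\alpha}\le C\|u'\|_{C^\alpha}$ by the previous step, while $\|Hu\|_\infty$ is controlled as above; adding these contributions yields $\|Hu\|_{1,\alpha}\le C\|u\|_{1,\alpha}$. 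This establishes boundedness on both spaces.
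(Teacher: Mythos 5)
Your proposal is correct, but there is nothing in the paper to compare it against: the paper states this theorem as classical background (it is Privalov's theorem on the conjugate function) and gives no proof at all, immediately moving on to remark that it makes $C^{1,\alpha}(\mathbb S^1)$ and $C^{\alpha}(\mathbb S^1)$ convenient spaces to work with. What you have supplied is the standard singular-integral proof, and it is sound: the kernel representation of the conjugate function via $\cot(t/2)$, the subtraction of $u(\theta)$ justified by oddness of the kernel, the translation of variable so that both kernels are centred compatibly, the near/far splitting with the mean value theorem gain $|\cot(t/2)-\cot((t+h)/2)|\le Ch/t^2$ in the far field, and the multiplier identity $(Hu)'=H(u')$ reducing the $C^{1,\alpha}$ bound to the $C^{\alpha}$ bound. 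Two points deserve to be written out in full if this were to be included: first, in the near region $|t|\le 2h$ the piece involving $\cot((t+h)/2)$ is not absolutely integrable, and one must split $u(\theta-t)-u(\theta)=\bigl(u(\theta-t)-u(\theta+h)\bigr)+\bigl(u(\theta+h)-u(\theta)\bigr)$, estimating the first difference by $[u]_\alpha|t+h|^\alpha$ and pairing the second (constant in $t$) with the principal value of the translated kernel, which is $O(1)$ by oddness about $t=-h$; you gesture at exactly this cancellation but do not display it. Second, the identity $(Hu)'=H(u')$ asserted via Fourier multipliers should be justified for $u\in C^{1,\alpha}$ (e.g.\ by noting that $H(u')$ is continuous by the $C^\alpha$ bound already proved and has the same Fourier coefficients as the distributional derivative of $Hu$). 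Your standing assumption $0<\alpha<1$ is the right one and matches the paper's usage elsewhere; the statement genuinely fails at $\alpha=1$, so this restriction is not a loss.
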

This Theorem makes it convenient for us to work with the spaces $C^{1,\alpha}(\mathbb S^1)$ and $C^{\alpha}(\mathbb S^1)$.

\section{Finite $\lambda$ -lemma}\label{sec:axiom_of_choice}
The first step in the proof of the $\lambda$-lemma is to reduce it
to the $\lambda$-lemma for finitely many points, \cite{MSS}.

\begin{theorem}{\bf The Finite $\lambda$-lemma} Assume $a_1, \dots, a_{n+1}\in \hat{\mathbb C}$, $a_i\neq a_j$ for $i\neq j$. Let
$f:\Delta\times\{a_1, \dots, a_n\}\to \hat{\mathbb C}$ be a holomorphic motion. Then there exists a
holomorphic motion $\tilde{f}: \Delta\times\{a_1,\dots, a_{n+1}\}\to \mathbb C$, so that
$\tilde{f}$ is an extension of $f$.
\end{theorem}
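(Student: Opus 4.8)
The plan is to add the single extra point by tracking it as a holomorphic section of a bundle of punctured spheres and then uniformizing. Write $p_i(\lambda) := f(\lambda, a_i)$ for $i = 1, \dots, n$; by hypothesis each $p_i$ is holomorphic on $\Delta$ with $p_i(0) = a_i$, and for every fixed $\lambda$ the points $p_1(\lambda), \dots, p_n(\lambda)$ are pairwise distinct. Producing an extension then amounts to finding a holomorphic map $g : \Delta \to \hat\C$ with $g(0) = a_{n+1}$ and $g(\lambda) \notin \{p_1(\lambda), \dots, p_n(\lambda)\}$ for all $\lambda$, since this last condition is exactly what forces $\tilde f_\lambda$ to be injective on $\{a_1, \dots, a_{n+1}\}$. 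Setting $\tilde f(\lambda, a_{n+1}) := g(\lambda)$ and $\tilde f(\lambda, a_i) := f(\lambda, a_i)$ would then finish the proof.

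First I would dispose of the cases $n \le 2$ directly: choosing a holomorphic family of M\"obius maps $m_\lambda$ with $m_0 = \mathrm{id}$ that immobilizes the points $p_i(\lambda)$, one sets $g(\lambda) := m_\lambda^{-1}(a_{n+1})$, and distinctness of $a_{n+1}$ from $a_1, \dots, a_n$ guarantees $g(\lambda) \ne p_i(\lambda)$. This leaves the essential case $n \ge 3$, in which the fiber $Y_\lambda := \hat\C \sm \{p_1(\lambda), \dots, p_n(\lambda)\}$ is a hyperbolic Riemann surface, an $n$-punctured sphere, depending holomorphically on $\lambda$.

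The heart of the argument is to construct a holomorphic family of universal covering maps $\pi_\lambda : \Delta \to Y_\lambda$, that is, a single holomorphic map $\pi : \Delta \times \Delta \to \hat\C$, $(\lambda, w) \mapsto \pi_\lambda(w)$, with each $\pi_\lambda$ a covering. Because the base $\Delta$ is simply connected, the monodromy of the family vanishes and the coverings can be chosen coherently, while holomorphic dependence follows from the holomorphic dependence of the uniformization on the moving punctures $p_i(\lambda)$. Granting this, I would pick $w_0 \in \Delta$ with $\pi_0(w_0) = a_{n+1}$ and set $g(\lambda) := \pi_\lambda(w_0)$. Then $g$ is holomorphic, $g(0) = a_{n+1}$, and $g(\lambda) \in Y_\lambda$ by construction, which is precisely the required extension.

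The main obstacle is exactly the coherent holomorphic family of covering maps in the previous paragraph: one must show that the universal coverings $\pi_\lambda$ can be normalized to vary holomorphically in $\lambda$, and that simple connectivity of $\Delta$ kills the monodromy, so that the single lift $w_0$ of $a_{n+1}$ propagates to a globally single-valued section. An alternative route that sidesteps explicit uniformization is to construct the section first on each subdisk $\Delta_\rho$, $\rho < 1$, by a connectedness argument, and then pass to the limit $\rho \to 1$ using Montel's theorem; its hypotheses are met because the candidate maps take values in the hyperbolic surface $Y_\lambda$ and hence form a normal family.
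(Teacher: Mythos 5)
Your reduction of the problem to finding a holomorphic section $g:\Delta\to\hat{\C}$ with $g(0)=a_{n+1}$ and $g(\lambda)\notin\{p_1(\lambda),\dots,p_n(\lambda)\}$ is correct, and the route you then take --- lifting through a holomorphic family of universal covers of the punctured spheres $Y_\lambda$ --- is the classical strategy going back to Ma\~n\'e--Sad--Sullivan and Bers--Royden. It is, however, entirely different from the proof in the paper, which reduces the Finite $\lambda$-lemma to a Filling Theorem for graphical tori (Section \ref{sec:filling_theorem}) and proves that by a continuity method, with the crucial a priori estimates coming from trapping the holomorphic disks inside strictly pseudoconvex hypersurfaces. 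More importantly, your argument has a genuine gap exactly where you yourself locate ``the main obstacle'': the existence of a coherent holomorphic family of universal covering maps $\pi_\lambda:\Delta\to Y_\lambda$ is not a routine consequence of ``holomorphic dependence of the uniformization on the moving punctures''; it is a theorem whose proof needs real input (Ahlfors--Bers holomorphic dependence of solutions of the Beltrami equation on parameters, or the theory of the Bers fiber space over Teichm\"uller space). Worse, there is a circularity lurking in the normalization: to pin down $\pi_\lambda$ holomorphically one typically prescribes $\pi_\lambda(0)$, i.e.\ a holomorphically varying basepoint in $Y_\lambda$ --- which is precisely a holomorphic section avoiding the graphs of the $p_i$, the object you are trying to construct. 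One must instead normalize through the covering group (transporting generators of $\pi_1(Y_{\lambda_0})$ along paths, using simple connectivity of $\Delta$ for single-valuedness, and quasiconformal conjugacies for the holomorphic dependence), none of which appears in your sketch.

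The fallback you offer --- build the section on each $\Delta_\rho$ ``by a connectedness argument'' and pass to the limit by Montel --- has the same problem. The limit step itself is fine: after normalizing three points to $0,1,\infty$ you get normality, and Hurwitz applied to $g-p_i$ shows the limit still avoids the graphs since $g(0)=a_{n+1}\neq a_i$; this mirrors the diagonal argument the paper uses in Section \ref{sec:axiom_of_choice}. But the construction of the section on $\Delta_\rho$ is where the entire content of the theorem lives, and ``connectedness argument'' is not an argument. In short: your skeleton is a viable alternative to the paper's pseudoconvexity approach, but the load-bearing step is asserted rather than proved, so as written this is not a proof.
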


\begin{proof}[Reduction of the $\lambda$-lemma to the finite $\lambda$-lemma (assuming the Extension-$\lambda$ lemma):]
We normalize the holomorphic motion $f$ so that three points $a_1, a_2, a_3$ stay fixed. We can assume $a_1=0$, $a_2=1$, $a_3=\infty$.

Let $\{a_n\}$ be a sequence of points that are dense in $\bar{A}$.
Let $\{z_n\}$ be a sequence of points that are dense in $\mathbb C\backslash \bar{A}$. Let $f_n$ be a holomorphic motion of $a_1,\dots, a_n,z_1,\dots, z_n$, such that
$$f_n(\lambda,a_i)=f(\lambda,a_i).$$
The existence of such holomorphic motion follows from the Finite
$\lambda$-lemma.

For any fixed $z_n$, for $k\geq n$ and $n\geq 3,$ maps $f_k$ are defined at the
point $z_n$, and functions $f_k(*, z_n):\Delta\to \mathbb
C\backslash\{0,1\}$ form a normal family. So we can choose a
convergent subsequence $f_{k}(*,z_n)$. Using the diagonal method, we get
a holomorphic motion $\tilde{f}$, that is well defined for all
$a_n$ and $z_n$ and coincides with $f$ on $a_i$ for all $i$. By the Extension
$\lambda$-lemma, we extend it uniquely to the holomorphic motion
of $\hat{\mathbb{C}}$. By construction,  it coincides with the
holomorphic motion $f$ on the set $A$.
\end{proof}

\section{Notations and Terminology}\label{sec:terminology}

We consider $\mathbb C^2$ with coordinates $(\lambda,w)$. The horizontal direction is parametrized by $\lambda$, the vertical by $w$.
Throughout the paper we consider disks of the form
$$w=g(\lambda)$$
that will depend on two different parameters.
We will use the following notations
$$g:\Delta\times\mathbb S^1\times [0,t_0]\to \mathbb C^2$$
$$g^t_{\xi}(\lambda):=g^t(\lambda,\xi):=g_{\xi}(\lambda,t):=g(\lambda,\xi,t).$$




\subsection{Graphical Torus}

Let $\pi:\mathbb C^2\to \mathbb C$, $\pi(\lambda,w)=\lambda$, be the projection to the first
coordinate.

We say that a torus $\Gamma\subset \{\partial \Delta\}\times \mathbb C$ is a {\bf graphical torus} if
for each $\lambda\in\partial \Delta$, $C_{\lambda}:=\pi^{-1}(\lambda)\in \mathbb C\backslash \{0\}$ is a simple closed curve that
has winding number $1$ around $0$.

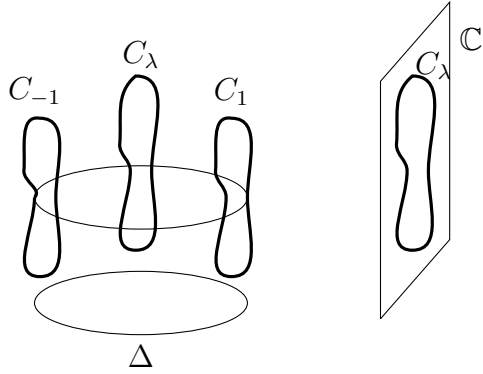
\begin{figure}[h!]
\begin{center}
\begin{tikzpicture}[scale =0.7]
\node at (0,-3) {$\Delta$};

\draw (0,-2) ellipse (2cm and 0.6cm);
\draw (0,0) ellipse (2cm and 0.6cm);

\draw [very thick] (-2,0) to  [in=-90] (-2.2,0.5) to  [out=85, in=170] (-1.9,1.5) to [out=0, in=90] (-1.6,0) to [out=-90, in=0]
(-1.9,-1.5) to [out=180, in=-90] (-2,0);

\node at (-2,2) {$C_{-1}$};

\draw [very thick] (-2+1.8,0+0.5) to  [out=90, in=-90] (-2.2+1.8,0.5+0.5) to  [out=85, in=220] (-1.9+1.8,1.8+0.5) to [out=0, in=90] (-1.6+1.8,0+0.5) to [out=-90, in=0]
(-1.9+1.8,-1.5+0.5) to [out=180, in=-90] (-2+1.8,0+0.5);

\node at (0,2.7) {$C_{\lambda}$};

\draw [very thick] (-2+3.6,0) to  [out=90, in=-90] (-2.2+3.6,0.5) to   [out=85, in=170] (-1.9+3.6,1.5) to [out=0, in=90] (-1.6+3.6,0) to [out=-90, in=0]
(-1.9+3.6,-1.5+0) to [out=180, in=-90] (-2+3.6,0);

\node at (1.7,2) {$C_{1}$};

\draw [very thick] (-2+7,0+0.5) to  [out=90, in=-90] (-2.2+7,0.5+0.5) to  [out=85, in=220] (-1.9+7,1.8+0.5) to [out=0, in=90] (-1.6+7,0+0.5) to [out=-90, in=0]
(-1.9+7,-1.5+0.5) to [out=180, in=-90] (-2+7,0+0.5);

\node at (5.5,2.5) {$C_{\lambda}$};

\draw (4.5,-2.3) to (4.5,2.2) to (5.8,3.7) to (5.8, -0.8) to (4.5,-2.3);

\node at (6.2, 3) {$\mathbb{C}$};

\end{tikzpicture}
\end{center}
\caption{Torus $\Gamma$}
\end{figure}

Thus, the vertical slices $\{C_{\lambda}:\ \lambda\in \mathbb S^1\}$ give a foliation of $\Gamma$. We wish to construct a transverse foliation of $\Gamma$. We will consider holomorphic functions $g_{\xi}:\Delta\to \mathbb C$, which extend continuously to $\bar{\Delta}$ and such that $\gamma_{\xi}:=g_{\xi}(\partial \Delta)\subset \Gamma$. We will construct a family of holomorphic disks such that $\{\gamma_{\xi}: \xi \in \mathbb S^1\}$ form another foliation of the torus $\Gamma$ that is transverse to the original foliation.

\subsection{Family of Graphical Tori}

Let $\{C_{\lambda}^t: t>0, \lambda\in \partial \Delta\}$ be smooth curves, such that
\begin{enumerate}
\item $C^t_{\lambda}$ have winding number $1$ around $0$;
\item for fixed $\lambda$, $C^t_{\lambda}$ form a smooth foliation of $\mathbb C\backslash\{0\}$;
\item there exists $\epsilon>0$, so that $C^t_{\lambda}=\{|w|^2=t\}$ for $t<\epsilon$.
\end{enumerate}

Let
$$\Gamma^t=\{(\lambda,w):\ \lambda\in \partial \Delta, w\in C^t_{\lambda}\}.$$
We set $\Gamma^0=\{(\lambda, 0):\ \lambda\in \partial \Delta\}$. We refer to $\Gamma^t$, $t\geq 0$ as smooth family of graphical tori, though for $t=0$ it degenerates to a circle $\Gamma^0$. The superscript $t$ will be applied to indicate the dependence on the torus $\Gamma^t$.

\subsection{Holomorphic Transverse Foliation of a Graphical Torus}

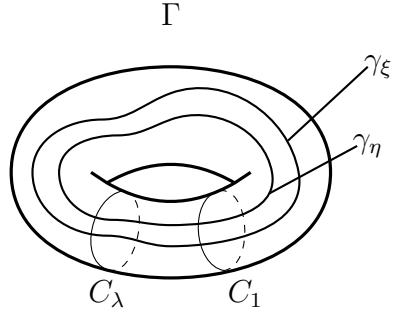
\begin{figure}[h!]
\begin{center}
\begin{tikzpicture}[scale =0.7]
\node at (0,3) {$\Gamma$};

\draw [very thick] (-3,0) to [out=90, in=180] (0,2) to [out=0, in=90] (3,0) to [out=-90, in=0] (0,-2) to [out=180, in=-90] (-3,0);

\draw [very thick] (-1.2,-0.2) to [out=30, in=150] (1.2,-0.2);
\draw [very thick] (-1.5,0) to (-1.2,-0.2) to [out=-30, in=-150] (1.2,-0.2) to (1.5,0);

\draw [thick] (-2.1,0) to [out=90, in=180] (-1,0.7) to [out=0, in=200] (0,1) to [out=20, in=100] (1.9,0) to [out=-80, in=0] (0,-1) to [out=180, in=0] (-1.1,-0.9) to [out=180, in=-90] (-2.1,0);

\draw [thick] (-2.6,0) to [out=90, in=180] (-1.3,1) to [out=0, in=200] (0,1.5) to [out=20, in=100] (2.4,0) to [out=-80, in=0] (0,-1.4) to [out=180, in=0] (-1.2,-1.2) to [out=180, in=-90] (-2.6,0);


\draw  (-0.9, -0.35) to [out=190, in=160] (-1.2,-1.84);
\draw  [dashed] (-0.9, -0.35) to [out=-10, in=-20] (-1.2,-1.84);
\node at (-1.2,-2.3) {$C_\lambda$};

\draw [dashed] (0.9, -0.35) to [out=10, in=10] (1.0,-1.86);
\draw (0.9, -0.35) to [out=195, in=170] (1.0,-1.86);
\node at (1.4,-2.3) {$C_1$};

\draw [thick] (2.2,0.6)--(3.7,2);
\node at (3.9,2) {$\gamma_\xi$};

\draw [thick] (1.85,-0.4)--(3.5,0.5);
\node at (3.7,0.5) {$\gamma_\eta$};

\end{tikzpicture}
\end{center}
\caption{Holomorphic Transverse Foliation of the Torus $\Gamma$}
\end{figure}

Let $\Gamma$ be a graphical torus. Let $g:\Delta\to \mathbb C$ be a holomorphic function that extends continuously to the closure $\bar{\Delta}$.
We say that the function $g:\bar{\Delta}\to \mathbb C$ defines a {\bf holomorphic disk} $D:=\{(\lambda,g(\lambda)):\lambda \in \Delta\}\subset \mathbb C^2$ with a {\bf trace} $\gamma:=\partial D$.

We will construct foliations of graphical tori by traces of holomorphic disks.   To do this, we will require additional properties:

We say that a function $g:\bar{\Delta}\times \mathbb S^1\to\mathbb C$ defines a {\bf holomorphic transverse foliation} of a graphical torus
$\Gamma$ if
\begin{enumerate}
\item $g$ is continuous.  
\item for each $\xi\in \mathbb S^1$, we let $\{\gamma_{\xi}:=g(\lambda,\xi): \lambda\in \partial \Delta\}$. The curves $\gamma_{\xi}$ are simple, pairwise disjoint and define a foliation of $\Gamma$.
\item Let $g_{\xi}(\lambda):=g(\lambda,\xi)$, $g_{\xi}:\Delta\to \mathbb C$ is holomorphic, $g_{\xi}\in C^{1,\alpha}(\bar{\Delta})$
\item $g_{\xi}(\lambda)\neq 0$, for all $\xi\in \mathbb S^1$, $\lambda\in \Delta$
\item $g_{\xi}(\lambda)\neq g_{\eta}(\lambda)$, for every $\lambda\in \Delta$ and distinct $\xi,\eta\in \mathbb S^1$.
\end{enumerate}

We will also consider holomorphic transverse foliations of a smooth family graphical tori $\{\Gamma^t\}$.  This refers to a smooth family of foliations of graphical tori $\Gamma^t$ with the additional assumption that the disks from $\Gamma^{t_1}$ are disjoint from the disks from $\Gamma^{t_2}$ if $t_1\ne t_2$.

In fact the leaves in all of our foliations will be closed, and thus they are also fibrations by curves.

%
%
%

\section{Holomorphic transverse foliations and the Finite $\lambda$-lemma}\label{sec:filling_theorem}

\newtheorem*{fill_th}{Filling Theorem} 

\begin{fill_th} \label{te:torus_foliation} Let $\Gamma$ be a graphical
torus, then there exist a function $g:\bar{\Delta}\times \mathbb
S^1\to \mathbb C$ that defines a holomorphic transverse foliation
of $\Gamma$. Moreover, the foliation is unique in the following strong
sense: if there is a function $h:\bar{\Delta}\to\mathbb C$ that
defines a holomorphic disk with trace in $\Gamma$, and if $h(\lambda)\neq 0$ for
$\lambda\in \Delta$, then there exists $\xi\in \mathbb S^1$ so
that $h=g_{\xi}$.
\end{fill_th}

We need the following slightly stronger statement to deduce the
Finite $\lambda$-lemma:

\newtheorem*{fill_th'}{Filling Theorem$'$}

\begin{fill_th'} Let $\Gamma^t$, $t\in [0,\infty)$ be a family of graphical tori. There exists a function
$g:\bar{\Delta}\times \mathbb S^1\times [0,\infty)\to \mathbb C$ that defines a
holomorphic transverse foliation of the family $\Gamma^t$. And the foliation is unique in the above mentioned strong sense.
\end{fill_th'}

The reduction of the Finite $\lambda$-lemma to Filling Theorem$'$
can be found in \cite{Slodkowski}. 
\begin{proof}[Reduction of the Finite $\lambda$-lemma to Filling Theorem$'$]
Let $f$ be a holomorphic motion of the points $a_1,\dots,a_n$. We
need to extend the motion $f$ to one more point $a_{n+1}$. To
achieve that we construct a holomorphic motion of all of $\mathbb
C$ and pick the leaf that passes through the point $a_{n+1}$.

We normalize the motion so that $a_1=0$, $f(\lambda,0)=0$ for all
$\lambda\in \Delta$. Let $\lambda=re^{i\theta}$. For each $r\in
[0,1)$, $e^{i\theta}\in \mathbb S^1$ the derivative $\frac{\partial
f}{\partial r} (\lambda, a_i)$ defines a vector $v_{\theta}(r,a_i)$ in $\mathbb C$. We
can extend it to a smooth family of vector fields $v_{\theta}(r,\cdot)$
on $\mathbb C$. By integrating the vector field for $r\in [0,1)$ and taking the union of solutions over
$\xi\in \mathbb S^1$,
we get a smooth motion $g:\Delta\times \mathbb C\to \mathbb{C}$ such that
$g(\lambda,a_i)=f(\lambda, a_i)$.

Let $C_0^t$ be a smooth family of simple curves that foliate
$\mathbb C\backslash \{0\}$. We choose the foliation so that
different $a_i$ belong to different curves $C_0^t$. Take $r<1$. Let
$\mathbb S_r=\{\lambda:\,|\lambda|=r\}$. Let
$C^{t}_\lambda=g(\lambda, C^t_0)$ for $\lambda \in \mathbb S_r$.

By Filling Theorem$'$, there exists a holomorphic
motion with the prescribed traces $\Gamma^t_r=\{(\lambda,
C^t_{\lambda}):\,\lambda\in \mathbb S_r\}$. By the uniqueness, it
coincides with $f$ on points $a_1, \dots, a_n$. By taking the
limit as $r\to 1$, we obtain a holomorphic motion of $\mathbb C$
that coincides with $f$ on $a_1,\dots, a_n$.
\end{proof}

\section{Trapping holomorphic disks inside pseudoconvex domains} \label{sec:trapping_disks}

The aim of the section is to prove a priori estimates for the derivative of a disk with the trace in a graphical torus (Corollary \ref{cor:apriori}), which is the heart of our proof of the $\lambda$-lemma. 

\subsection{Estimates for holomorphic disks trapped inside strictly
pseudoconvex domains}

The next theorem is from \cite{BK}, \cite{BG}. We do not use the result of the theorem. We provide the proof to shed light on the technique we use and put the results in
a general context. 

\begin{theorem} \cite{BK}, \cite{BG}\label{te:angle_estimate} Let $\Omega$ be a strictly pseudoconvex domain, and let $M$ be a totally real $2$-dimensional manifold,
$M\subset \partial \Omega$. Let $g:\Delta\to \Omega$ be an injective
holomorphic function that extends as a $C^1$ smooth function to the closure $\bar{\Delta}$. Set
$D=g(\Delta)$. Assume that $\gamma:=\partial D\subset M$. Then there
is a constant $\alpha = \alpha(M, \Omega)$, so that the angle
$\angle(T_p\gamma,\xi_p)>\alpha$ is uniformly large, independently of $D$.
\end{theorem}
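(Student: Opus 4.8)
The plan is to show that the holomorphic disk $D$, being trapped inside the strictly pseudoconvex domain $\Omega$ with boundary trace $\gamma$ lying on the totally real manifold $M\subset\partial\Omega$, cannot meet $\gamma$ ``too tangentially'' to the characteristic direction $\xi_p$. First I would exploit strict pseudoconvexity: near a boundary point $p\in M$ I would choose the strictly psh defining function $\rho$ with $\Omega=\{\rho<0\}$ and consider the subharmonic function $u:=\rho\circ g$ on $\bar\Delta$. Since $g(\Delta)\subset\Omega$ we have $u<0$ inside and $u=0$ on $\partial\Delta$ (because $\gamma\subset M\subset\partial\Omega$), so $u$ attains its boundary maximum along all of $\partial\Delta$. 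The Hopf lemma then forces the outer normal derivative $\partial u/\partial n$ to be strictly positive, and strict pseudoconvexity should give a \emph{uniform} lower bound on this normal derivative in terms of the positive definiteness of the Levi form of $\rho$ and the size of $|g'|$ on $\partial\Delta$.

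The key geometric step is to relate this normal-derivative information to the angle $\angle(T_p\gamma,\xi_p)$. I would compute the differential $dg_p$ restricted to $T\partial\Delta$: its image is the tangent line $T_p\gamma$, which lies in $T_pM\subset T_p\partial\Omega$. The characteristic direction $\xi_p=H_p\partial\Omega\cap T_pM$ is exactly the complex-tangential direction, so I want to bound $T_p\gamma$ away from $\xi_p$, equivalently to show $T_p\gamma$ has a definite component in the totally real (non-complex-tangential) direction. The clean way to see this is to note that if $T_p\gamma$ were nearly parallel to $\xi_p$, then $dg_p$ would map the boundary tangent nearly into the complex tangent space $H_p\partial\Omega$, and the Cauchy--Riemann equations for $g$ would then force the normal component of the Levi form applied to $g'$ to be small — contradicting the Hopf-type lower bound. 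Concretely I would expand $\rho\circ g$ to second order along $\partial\Delta$ and use that the Levi form controls $\partial u/\partial n$ from below, while the tangential behavior of $\gamma$ controls the complex-tangential part of $g'$.

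I expect the main obstacle to be making the lower bound on the angle \emph{uniform} in $D$, i.e.\ independent of the particular disk. The pointwise positivity of the angle at each $p$ is essentially a maximum-principle/Hopf-lemma statement, but turning it into a constant $\alpha=\alpha(M,\Omega)$ depending only on the geometry requires controlling the relevant quantities uniformly: the minimum eigenvalue of the Levi form of $\rho$ on a compact neighborhood of $M$ (bounded below by strict pseudoconvexity and compactness), and a normalization that removes the scaling freedom in $|g'|$. I would handle the scaling by working with the \emph{direction} of $g'(\zeta)$ for $\zeta\in\partial\Delta$ rather than its magnitude, so that the estimate becomes a statement purely about the angle between the image tangent line and the characteristic field; the uniform ellipticity of the Levi form then yields the uniform $\alpha$.

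Two technical points I would check carefully: first, that the totally real hypothesis on $M$ guarantees the characteristic field $\chi(M,\Omega)$ is well-defined and of constant rank (a one-dimensional real line field) in a neighborhood, so that ``the angle to $\xi_p$'' makes sense everywhere on $\gamma$; and second, that $C^1$ regularity of $g$ up to $\bar\Delta$ is enough to apply the Hopf lemma to the subharmonic function $u=\rho\circ g$ — here the strict subharmonicity coming from strict pseudoconvexity (via Lemma~\ref{lem:disk_trapping}'s computation that $\rho\circ g$ is subharmonic, now upgraded to strictly subharmonic away from the complex-tangential locus) provides the needed strict inequality in the boundary-point lemma.
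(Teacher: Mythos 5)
Your first step---applying the Hopf lemma to the subharmonic function $u=\rho\circ g$ to get pointwise transversality of $T_p\gamma$ to the characteristic direction---is exactly the paper's Lemma \ref{lem:transv_ch}, and it is correct. The gap is in the uniformity step, which you correctly identify as the main obstacle but do not actually close. A quantitative Hopf/barrier estimate bounds $\frac{\partial u}{\partial r}(p)$ from below in terms of $\inf_{\Delta}\Delta u\geq \lambda_{\min}\inf_{\Delta}|g'|^2$ (with $\lambda_{\min}$ the least eigenvalue of the Levi form), whereas the quantity controlling the angle at $p$ is the normal component of $dg(\partial_\theta)$ divided by $|g'(p)|$. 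The resulting ratio involves $\inf_{\Delta}|g'|^2/|g'(p)|$, which is neither scale-invariant nor bounded below independently of $D$: since $\log|g'|$ is harmonic, $|g'|$ can be of order $1$ at the boundary point $p$ while being exponentially small on most of $\Delta$ (take $g'$ close to $e^{N(\lambda\bar p-1)}$), so the strict subharmonicity of $u$ degenerates away from $p$ and the Hopf constant with it. Passing to ``the direction of $g'$'' does not remove this problem, because the Hopf lower bound is inherently an interior-to-boundary comparison and depends on how negative $u$ is inside the disk, not only on data at $p$. So your argument, as written, proves only that the angle is strictly positive at each point of each disk, not that it admits a lower bound $\alpha(M,\Omega)$ independent of $D$.

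The paper's route around this is genuinely different and is the whole point of the section: it keeps the transversality purely qualitative but applies it to an entire family of strictly pseudoconvex domains. Lemma \ref{lem:family_pseudoconvex} deforms $\Omega$ to domains $\Omega_{\epsilon}$, all containing $M$ in their boundaries, whose characteristic fields sweep out a cone $K_{\alpha}$ of uniform aperture around $\xi_p$; Lemma \ref{lem:disk_trapping} (a maximum-principle argument) shows that $D$ remains trapped inside every $\Omega_{\epsilon}$; and Lemma \ref{lem:transv_ch} applied to each $\Omega_{\epsilon}$ forces $T_p\gamma$ to avoid every direction in the cone. The uniform bound $\alpha$ is then simply the aperture of the cone, which depends only on $(M,\Omega)$. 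Some device of this kind (or an equivalent one) is what your proposal is missing.
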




\begin{lemma}\label{lem:transv_ch} Under hypothesis of Theorem \ref{te:angle_estimate}, for every point $p\in \gamma$, $T_p\gamma$ is transverse to the characteristic field of directions $\chi(M, \Omega)$.
\end{lemma}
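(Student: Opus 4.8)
The plan is to show that $T_p\gamma$ cannot coincide with the characteristic direction $\langle \xi_p \rangle$ at any point $p \in \gamma$, by extracting information from the way the disk $D$ sits inside $\Omega$. The key observation is that transversality to $\chi(M,\Omega)$ is a strictly weaker conclusion than the uniform angle estimate of Theorem \ref{te:angle_estimate}: wherever the angle $\angle(T_p\gamma, \xi_p)$ is bounded below by $\alpha > 0$, the tangent line is in particular not equal to $\langle \xi_p \rangle$. So at the level of logic the lemma is an immediate consequence of the theorem. However, since the purpose here is to isolate the geometric mechanism, I would give a direct argument that does not invoke the full quantitative estimate, and which in fact is the natural first step inside the proof of Theorem \ref{te:angle_estimate} itself.

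\medskip

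First I would set up the local picture at a point $p \in \gamma \subset M \subset \partial\Omega$. Since $g$ is holomorphic on $\Delta$ and $C^1$ up to the boundary, the tangent space $T_p D$ is a complex line in $\mathbb C^2$ (the image of the complex line $T_\lambda \Delta$ under $dg_\lambda$, where $g(\lambda) = p$). Because $D \subset \Omega$ with $\partial D \subset \partial\Omega$, the function $\rho \circ g$ is subharmonic on $\Delta$, is $\le 0$, and vanishes on $\partial\Delta$; this is exactly the trapping situation of Lemma \ref{lem:disk_trapping}. The Hopf lemma (strict subharmonicity of $\rho\circ g$ coming from strict plurisubharmonicity of $\rho$) forces the outward normal derivative of $\rho\circ g$ to be strictly positive at each boundary point, which means $T_p D$ is genuinely transverse to $\partial\Omega$ and in particular $T_p D$ is not contained in the real tangent space $T_p\partial\Omega$.

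\medskip

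The heart of the matter is then a linear-algebra comparison in the complex line $T_p D$. Suppose, toward a contradiction, that $T_p\gamma = \langle \xi_p\rangle = H_p\partial\Omega \cap T_p M$. Since $\gamma = \partial D$, the real line $T_p\gamma$ lies inside the complex line $T_p D$; being a real line in a complex line, $T_p D = T_p\gamma \oplus i\,T_p\gamma$ as a real two-plane. If $T_p\gamma$ were the characteristic direction, then $T_p\gamma \subset H_p\partial\Omega = T_p\partial\Omega \cap i\,T_p\partial\Omega$, and hence $i\,T_p\gamma \subset i H_p\partial\Omega = H_p\partial\Omega \subset T_p\partial\Omega$ as well. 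This would give $T_p D = T_p\gamma \oplus i\,T_p\gamma \subset T_p\partial\Omega$, contradicting the transversality of $T_p D$ to $\partial\Omega$ established via the Hopf lemma in the previous step. Therefore $T_p\gamma \neq \langle \xi_p \rangle$, i.e. $T_p\gamma$ is transverse to $\chi(M,\Omega)$.

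\medskip

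The main obstacle, and the step deserving the most care, is the passage from ``$D \subset \Omega$, $\partial D \subset \partial \Omega$'' to the strict transversality of the complex tangent line $T_p D$ to the real hypersurface $\partial\Omega$; this is where strict pseudoconvexity (equivalently, strict plurisubharmonicity of $\rho$ and the resulting strict subharmonicity of $\rho\circ g$) is essential, and where a boundary-regularity/Hopf-type argument must be invoked cleanly. Once that transversality is in hand, the concluding dimension count showing that $T_p\gamma \subset H_p\partial\Omega$ would force the whole complex line $T_p D$ into $T_p\partial\Omega$ is purely formal.
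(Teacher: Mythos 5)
Your proposal is correct and takes essentially the same route as the paper's own proof: both apply the Hopf lemma to the subharmonic function $\rho\circ g$ to obtain strict positivity of the radial derivative at the boundary point, and then exploit the complex-linearity of $H_p\partial\Omega$ to rule out $T_p\gamma$ being characteristic. The only difference is presentational --- the paper argues directly that the disk's inner normal $iT_p\gamma$ does not lie in $T_p\partial\Omega$ and hence is transverse to $i\xi_p$, while you recast the same linear algebra as a contradiction ($T_p\gamma=\langle\xi_p\rangle$ would force $T_pD=T_p\gamma\oplus iT_p\gamma\subset H_p\partial\Omega\subset T_p\partial\Omega$).
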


\begin{proof} Let $\rho$ be a strictly psh function such that $\Omega=\{\rho<0\}$. The function $\rho\circ g:\Delta\to \mathbb R$ is subharmonic. Let $p\in \partial \Delta$. By the Hopf Lemma, the radial derivative $\frac{\partial \left(\rho\circ g\right)}{\partial r}(p)>0$. Let $\xi_p$ be a vector that defines the characteristic direction in a point $p$. The normal vector to the disk $g(\Delta)$ in a point $p$ is $iT_p\gamma$. It does not belong to the tangent plane to $\partial \Omega,$ so $iT_p\gamma$ is transverse to $i\xi_p$. Therefore, $T_p\gamma$ is transverse to $\xi_p$.
\end{proof}

Let $n_p$ be the unit outward normal vector to the hypersurface $\partial \Omega$. The vectors $(\xi_p, i\xi_p, n_p, in_p)$ form an
orthonormal basis in $\mathbb C^2\approx \mathbb R^4$ with respect to Euclidean inner product $(\cdot,\cdot)$. The vectors $in_p$ and $\xi_p$ form an orthonormal basis for $T_pM$.   Given $\alpha$, we define a conical neighborhood of $\xi_p$:
$$K_{\alpha}=\{v\in T_pM: (v,\xi_p)>\alpha(v,in_p)\} \subset T_pM.$$

\begin{lemma}\label{lem:family_pseudoconvex} Let $\Omega$ be a strictly pseudoconvex domain, and let $M\subset \partial \Omega$ be totally real.  There
exist $\alpha>0$, and a continuous family of strictly
pseudoconvex domains $\Omega_{\epsilon}$ such that $M\subset \partial
\Omega_{\epsilon}$, and the characteristic fields of directions $\chi(M,
\Omega_{\epsilon})$ fill the cone-fields $K_{\alpha}$.
\end{lemma}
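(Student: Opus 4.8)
The plan is to reduce the statement to a pointwise computation in the normal planes $T_pM^{\perp}$ and then realize the required deformation by a single additive perturbation of the defining function.

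First I would record the basic rotation principle at a point $p\in M$. Working in the orthonormal frame $(\xi_p,i\xi_p,n_p,in_p)$, so that $T_pM=\langle \xi_p,in_p\rangle$ and $T_pM^{\perp}=\langle i\xi_p,n_p\rangle$, suppose a strictly pseudoconvex $\Omega'$ has $M\subset\partial\Omega'$ with outward conormal $N=\mu\,n_p+\epsilon\,i\xi_p$, $\mu>0$. Since $M\subset\partial\Omega'$ forces $N\perp T_pM$, the only freedom in $N$ is its angle inside the plane $\langle n_p,i\xi_p\rangle$. Computing $H_p\partial\Omega'\cap T_pM=\{v\in T_pM: v\perp N,\ v\perp iN\}$ and using $iN=\mu\,in_p-\epsilon\,\xi_p$ gives
\[
\chi(M,\Omega')_p=\langle \mu\,\xi_p+\epsilon\,in_p\rangle ,
\]
so tilting the conormal toward $i\xi_p$ by the angle $\arctan(\epsilon/\mu)$ rotates the characteristic direction by the same angle toward $in_p$ within $T_pM$. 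Hence it suffices to build a family of domains whose conormals along $M$ sweep an angular interval in $\langle n_p,i\xi_p\rangle$, uniformly in $p$.

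Next I would construct that family by the additive perturbation $\rho_\epsilon=\rho+\epsilon\,h$, with $\Omega_\epsilon:=\{\rho_\epsilon<0\}$. I need $h$ with $h|_M\equiv 0$ (so that $M\subset\partial\Omega_\epsilon$) and $\nabla h|_M=i\xi$, where $\xi$ is a global smooth unit generator of the characteristic line field. Such a generator exists because the characteristic direction is the kernel of the restriction to $M$ of the contact form $\eta$ defining the complex tangencies $H\partial\Omega$: since $M$ is totally real we have $T_pM\not\subset H_p\partial\Omega=\ker\eta$, so $\eta|_{TM}$ is a nonvanishing $1$-form, and together with an orientation of the torus $M$ this orients the characteristic field. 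Having fixed $\xi$, I take a tubular neighborhood of $M$ with nearest-point projection $\pi$ and set $h(x)=\langle x-\pi(x),\,i\xi_{\pi(x)}\rangle$ times a cutoff supported near $M$; then $h|_M=0$ and, because $x-\pi(x)$ is normal to $M$ with $i\xi_p\in T_pM^{\perp}$, one gets $\nabla h|_M=i\xi$ as required, giving $\nabla\rho_\epsilon(p)=\mu(p)\,n_p+\epsilon\,i\xi_p$.

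Finally I would verify the conclusions. Strict plurisubharmonicity is an open condition on the complex Hessian; as $\rho$ is strictly psh on a compact neighborhood of $\bar\Omega$ and $h$ is fixed, the Levi form of $\rho_\epsilon$ stays positive definite for $|\epsilon|<\epsilon_0$, uniformly in $p$, and $\nabla\rho_\epsilon\neq 0$ near $M$, so each $\Omega_\epsilon$ is a smoothly bounded strictly pseudoconvex domain containing $M$ in its boundary, with $\epsilon\mapsto\rho_\epsilon$ continuous (indeed affine). By the rotation principle, $\chi(M,\Omega_\epsilon)_p=\langle \mu(p)\,\xi_p+\epsilon\,in_p\rangle$ with $\mu(p)=|\nabla\rho(p)|\in[\mu_{\min},\mu_{\max}]$ by compactness, so as $\epsilon$ runs over $(-\epsilon_0,\epsilon_0)$ the achieved directions cover the cone about $\xi_p$ of half-angle $\arctan(\epsilon_0/\mu_{\max})$; taking $\alpha=\mu_{\max}/\epsilon_0$, the fields $\chi(M,\Omega_\epsilon)$ fill $K_\alpha$ at every point. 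The main obstacle is precisely this global construction of a single $h$ realizing the tilt simultaneously along all of $M$: it requires a globally defined smooth tilt direction $i\xi$ — which is where the orientability of the characteristic field via the contact form is used — together with the uniform preservation of strict pseudoconvexity, which is what makes the family admissible and lets one value of $\alpha>0$ work over the entire torus.
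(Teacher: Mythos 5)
Your proof is correct and follows essentially the same route as the paper: both constructions perturb the defining function by a term that vanishes on $M$ and tilts $\partial \Omega$ along $M$ in the $i\xi$ direction, then use the openness of strict plurisubharmonicity under $C^2$-small perturbations to keep $\Omega_{\epsilon}$ strictly pseudoconvex. The only cosmetic difference is that you use the additive perturbation $\rho_{\epsilon}=\rho+\epsilon h$ with $\nabla h|_M=i\xi$, while the paper composes $\rho$ with the normal displacement $z\mapsto z+\epsilon h\vec{n}$ (which agrees with your perturbation to first order in $\epsilon$); your explicit conormal-rotation computation and the orientability argument for the characteristic field simply make precise steps the paper leaves implicit.
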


\begin{proof} The manifold $M$ separates $\partial \Omega$ into two parts
$(\partial \Omega)_1$, $(\partial \Omega)_2$. Let $h$ be a smooth
function such that
\begin{enumerate}
\item $h|_{M}=0$;
\item $h|_{(\partial \Omega)_1}>0$, $h_{(\partial \Omega)_2}<0$;
\item $\frac{\partial h}{\partial (i\xi_p)}>0$, for each $p\in M$.
\end{enumerate}
Let us denote by $\vec{n}$ the normal field to the hypersurfaces
$\rho=const$. Since we can identify $T_p\mathbb C^2$ with $\mathbb
C^2$, we can treat the normal vector field $n$ as a function
defined in a neighborhood of $\partial \Omega$. We use the same
letter $n$ for this function. Let $\rho_{\epsilon}(z)=\rho(z+\epsilon h\vec{n})$,
$\Omega_{\epsilon}=\{\rho_{\epsilon}<0\}$. Then there exists $\delta$, so that for
$|\epsilon|<\delta$, $\rho_{\epsilon}$ are plurisubharmonic. Therefore, $\Omega_{\epsilon}$
are strictly pseudoconvex, and characteristic fields of directions
to $\Omega_{\epsilon}$ fill the cone field $K_{\alpha}$.

\end{proof}

\begin{proof}[Proof of Theorem \ref{te:angle_estimate}]

Let $D\subset \Omega$, $\partial D\subset \partial \Omega$. Then
by Lemma \ref{lem:family_pseudoconvex}, there exists a continuous
family of strictly pseudoconvex domains $\Omega_{\epsilon}$, $|\epsilon|<\delta$
so that their characteristic fields of directions fill
$C_{\alpha}$, for some $\alpha>0$. By Lemma
\ref{lem:disk_trapping}, $D\subset \Omega_{\epsilon}$ for $|\epsilon|<\delta$.
Therefore, an angle estimate follows.
\end{proof}

\subsection{Pseudoconvex domains for Graphical Tori}
We wish to obtain the angle estimates for graphical tori. Let $\eta_p$ be a vector that is tangent to the curve
$C_{\lambda}$ in a point $p$. We want to think of $\eta_p$ as a characteristic direction. However, a priori a graphical torus $\Gamma$ does not belong
to a pseudoconvex domain. It belongs to a Levi flat domain $\{|\lambda|=1\}\times \mathbb{C}$. Our strategy is to curve this Levi
flat domain to obtain a family of pseudoconvex domains whose boundaries contain the torus $\Gamma$ and so that characteristic directions span a wedge around $\eta_p$. 

\begin{theorem}\label{te:anlgle_estimate2} Let $\Gamma$ be a graphical torus. Assume that $g:\Delta\to \mathbb C$ defines a holomorphic disk $D$ with the trace $\gamma\subset \Gamma$, $g(\lambda)\neq 0$. Then there exists a constant $\alpha=\alpha(\Gamma)>0$ (independent of $D$) so that the angle $\angle(\eta_p,
T_p\gamma)$ is bounded below by $\alpha$ independently of $D$. 
\end{theorem}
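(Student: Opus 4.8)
The plan is to reduce Theorem \ref{te:anlgle_estimate2} to the already-proved Theorem \ref{te:angle_estimate} by realizing the graphical torus $\Gamma$ as a totally real manifold inside the boundary of a family of strictly pseudoconvex domains, with the tangent direction $\eta_p$ to the curves $C_\lambda$ playing the role of the characteristic direction. The key point emphasized in the preceding subsection is that $\Gamma$ naturally sits in the Levi-flat hypersurface $\{|\lambda|=1\}\times\mathbb C$, which is not pseudoconvex, so the first task is to ``curve'' this Levi-flat boundary into a genuinely strictly pseudoconvex one containing $\Gamma$.

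Concretely, I would first write down a defining function for a domain of the form $\Omega=\{(\lambda,w):\rho(\lambda,w)<0\}$ where $\rho$ agrees to first order with $|\lambda|^2-1$ near $\Gamma$ but is perturbed by a term that makes the complex Hessian positive definite. The natural guess is $\rho(\lambda,w)=|\lambda|^2-1+c\,\psi(w)$ for a suitable strictly psh auxiliary function $\psi$ and small constant $c>0$, chosen so that $\{\rho=0\}$ still contains $\Gamma$ (using the description of $\Gamma$ via the curves $C_\lambda$) while the matrix $\left(\frac{\partial^2\rho}{\partial z_j\partial\bar z_k}\right)$ becomes positive definite. One must check two things: that $\Gamma\subset\partial\Omega$ and that $\Gamma$ is totally real in $\partial\Omega$, the latter following from the graphical/winding-number hypothesis since the tangent plane to $\Gamma$ is spanned by the horizontal direction $\partial\Delta$ and the direction $\eta_p$ along $C_\lambda$, and these cannot together be a complex line. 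Having built such an $\Omega$, I would verify that the characteristic direction $\langle\xi_p\rangle=H_p\partial\Omega\cap T_p\Gamma$ coincides with (or is transverse-controlled by) $\eta_p$; with the defining function above, the holomorphic tangent space along $\Gamma$ meets $T_p\Gamma$ precisely in the direction of $C_\lambda$.

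Once $\Gamma$ is exhibited as a totally real manifold in the boundary of a strictly pseudoconvex $\Omega$ with $\xi_p$ identified as $\eta_p$, I would invoke Theorem \ref{te:angle_estimate} directly: the disk $D=g(\Delta)$ has trace $\gamma\subset\Gamma\subset\partial\Omega$, and the hypothesis $g(\lambda)\neq 0$ ensures $D$ stays off the vertical axis where the construction might degenerate, so that $D\subset\Omega$ and the angle estimate $\angle(T_p\gamma,\xi_p)>\alpha$ applies with $\alpha=\alpha(M,\Omega)=\alpha(\Gamma)$. Translating $\xi_p=\eta_p$ yields the desired bound $\angle(\eta_p,T_p\gamma)>\alpha$, uniform in $D$ because the pseudoconvex domain $\Omega$ depends only on $\Gamma$ and not on the particular disk.

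The main obstacle I anticipate is the construction of the strictly pseudoconvex $\Omega$ itself: the Levi-flat slab has zero Levi form in the $w$-direction, so the perturbation must simultaneously preserve the property $\Gamma\subset\partial\Omega$ (which is a rigid condition tied to the shape of each $C_\lambda$) and inject enough positivity into the complex Hessian. Because the $C_\lambda$ vary with $\lambda$ and need only be simple closed curves of winding number $1$ rather than round circles, I would likely first conformally straighten each slice—or use the graphical structure to write $C_\lambda$ as a graph over the unit circle in $w$—so that a single global defining function can be produced, and then confirm strict plurisubharmonicity holds uniformly on a neighborhood of $\Gamma$ (away from $w=0$, which is why $g(\lambda)\neq0$ is imposed). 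Handling the non-roundness of the $C_\lambda$ while keeping $\eta_p$ as the characteristic direction is the delicate step; everything after that is a direct appeal to the trapping lemma and the angle estimate already established.
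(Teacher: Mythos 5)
Your high-level strategy (curve the Levi-flat hypersurface $\{|\lambda|=1\}\times\mathbb C$ into pseudoconvex domains containing $\Gamma$, and read the angle bound off characteristic directions plus trapping) is indeed the paper's strategy, but the specific reduction you propose — build \emph{one} strictly pseudoconvex $\Omega$ whose characteristic direction along $\Gamma$ is exactly $\eta_p$, then quote Theorem \ref{te:angle_estimate} — contains a genuine gap, and it is precisely the step the paper is organized to avoid (the paper states explicitly that the \emph{result} of Theorem \ref{te:angle_estimate} is not used, only its technique). The identification $\xi_p=\eta_p$ forces the vertical complex line to lie in $H_p\partial\Omega$, i.e.\ $\rho_w=0$ along each curve $C_{\lambda_0}$. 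But then $u:=\rho|_{\{\lambda_0\}\times\mathbb C}$ is strictly subharmonic, vanishes on $C_{\lambda_0}$, and is $\le 0$ on $\bar\Omega\cap(\{\lambda_0\}\times\mathbb C)$; any point of $C_{\lambda_0}$ interior to that slice would be an interior maximum of a strictly subharmonic function, which is impossible. Hence $\bar\Omega$ must pinch down to the curve $C_{\lambda_0}$ in every vertical line, so $\Omega$ is a thin set hugging $\Gamma$ near $|\lambda|=1$. For such an $\Omega$ the hypothesis $g:\Delta\to\Omega$ of Theorem \ref{te:angle_estimate} cannot be verified: a priori the disks can sit anywhere in $\{|\lambda|<1,\ r\le |w|\le R\}$ (that is all the maximum principle gives), and proving $D\subset\Omega$ is exactly as hard as the estimate you want. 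Lemma \ref{lem:disk_trapping} does not rescue this, because it needs a continuous family of \emph{pseudoconvex} domains, all with $\Gamma$ in their boundaries, joining $\Omega$ to a domain that manifestly contains $D$; interpolants of your type (defining functions like $|\lambda|^2-1+s\,c(\phi-1)^2$) fail to be plurisubharmonic away from $\Gamma$. A separate, smaller error: your ansatz $\rho=|\lambda|^2-1+c\,\psi(w)$ with $\psi$ a function of $w$ alone can never have $\Gamma$ in its zero set, since the curves $C_\lambda$ vary with $\lambda$; the auxiliary function must depend on both variables, which is the content of Lemma \ref{lem:phi}.

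What the paper does instead is give up on hitting $\eta_p$ exactly. It extends the defining function $F(\lambda,w)$ of the torus family to $\phi(\lambda,w)$ subharmonic in $w$ on all of $\bar\Delta\times\mathbb C$ (Lemma \ref{lem:phi}), so that $\omega_\epsilon=\tfrac1\epsilon(|\lambda|^2-1)+\phi$ is \emph{globally} strictly psh for small $\epsilon>0$; the domains $\Omega_\epsilon=\{\omega_\epsilon<1\}$ contain $\Gamma$ in their boundaries and converge to the Levi-flat slab $\{|\lambda|<1\}$, so Lemma \ref{lem:disk_trapping} traps every disk in every $\Omega_\epsilon$. The characteristic directions of $\Omega_\epsilon$ approach $\eta_p$ as $\epsilon\to 0$ but never equal it, and they sweep only one side of $\eta_p$; a second family $\Sigma_\epsilon$, built from the superharmonic extension $\psi$ of Lemma \ref{lem:psi} (with $\psi(\cdot,0)=-\infty$ — this is where the hypothesis $g\neq 0$ is genuinely used), sweeps the other side. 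Together the characteristic directions fill the punctured cone $K^{\circ}_{\alpha}$ (Lemma \ref{lem:pseudo}); since $T_p\gamma$ has nonzero horizontal component (the disk is a graph over $\Delta$), it is never parallel to $\eta_p$, and Lemma \ref{lem:transv_ch} applied to each trapping domain forces $T_p\gamma$ out of the whole cone, yielding the uniform bound $\alpha$. In short, the uniform angle estimate comes from transversality to a cone-filling \emph{family} of characteristic directions, not from a single domain with $\xi_p=\eta_p$ — the latter does not exist in any form usable for trapping.
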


We need Lemmas \ref{lem:phi}, \ref{lem:psi} and \ref{lem:pseudo} to prove Theorem
\ref{te:anlgle_estimate2}.

Consider a family of the graphical tori $\Gamma^t$, $\Gamma^1=\Gamma$. Let $F:\mathbb S^1\times \mathbb C\to \mathbb R$ be a defining function, $F^{-1}(t)=\Gamma^t$. 
Let us extend $F$ to a smooth function $F: \bar{\Delta}\times \mathbb C\to \mathbb R$, so
that $F(\lambda,w)=|w|^2$ for all $\lambda\in \bar{\Delta}$,
$|w|\leq \epsilon$. We can also satisfy the condition $F'_w\neq 0$.

\begin{lemma}\label{lem:phi} There exists a function
$\phi:\bar{\Delta}\times \mathbb C\to \mathbb R_{\geq 0}$, so that $\phi$ is
smooth, $\Delta_{w}\phi>0$, and restriction of $\phi$ to $\mathbb S^1\times \mathbb C$ defines a foliation of $\mathbb S^1\times \mathbb C$ by $\Gamma^t$. We also require that for $|\lambda|=1$
$\phi_{\lambda}^{-1}(1)=C_{\lambda}.$
\end{lemma}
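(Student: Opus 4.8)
The plan is to look for $\phi$ of the special form $\phi=\chi\circ F$, where $\chi:\mathbb{R}_{\geq 0}\to\mathbb{R}_{\geq 0}$ is a smooth, strictly increasing function to be constructed. The point of this ansatz is that composing with a strictly increasing function preserves level sets, so $\phi$ and $F$ automatically foliate $\mathbb{S}^1\times\mathbb{C}$ by the same curves $\Gamma^t$; moreover, normalizing $\chi(0)=0$ and $\chi(1)=1$ immediately gives $\phi\geq 0$ (since $F\geq 0$) and $\phi_\lambda^{-1}(1)=F_\lambda^{-1}(1)=C_\lambda$ for $|\lambda|=1$, while smoothness of $\phi$ follows from that of $\chi$ and $F$. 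Thus every requirement except $\Delta_w\phi>0$ is built into the ansatz, and the entire content of the lemma is to choose $\chi$ convex enough to force strict subharmonicity in $w$.

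The relevant computation is
\[
\Delta_w(\chi\circ F)=4\,\chi''(F)\,|F_w|^2+\chi'(F)\,\Delta_w F .
\]
Since $\chi'>0$, the second term may be negative, but the first term is a positive multiple of $\chi''$, and away from the core $\{w=0\}$ we have $|F_w|^2>0$ by the assumption $F'_w\neq 0$. Hence the strategy is to make $\chi''$ large enough, relative to $\chi'$, that the first term dominates. Quantitatively, on $\{w\neq 0\}$ positivity is equivalent to
\[
\frac{\chi''(F)}{\chi'(F)}>\frac{-F_{w\bar w}}{|F_w|^2}.
\]

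To turn this pointwise requirement into a single choice of $\chi$, I would exploit that the level sets $\{F=s\}\subset\bar\Delta\times\mathbb{C}$ are \emph{compact} for each $s>0$: each slice is a simple closed curve around $0$, and $\bar\Delta$ is compact. On such a level set the continuous function $-F_{w\bar w}/|F_w|^2$ is bounded, so $C(s):=\sup_{\{F=s\}}\bigl(-F_{w\bar w}/|F_w|^2\bigr)_+$ is finite; near the core $C(s)=0$ because there $F=|w|^2$ gives $F_{w\bar w}=1>0$. Choosing a smooth majorant $\beta(s)>C(s)$ and setting $\chi'(s)=a\exp\!\bigl(\int_0^s\beta\bigr)$ yields $(\log\chi')'=\beta>C$, hence $\Delta_w\phi>0$ on $\{w\neq 0\}$; at the core itself $F=|w|^2$ gives $\Delta_w\phi=4\chi'(0)>0$ directly. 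Integrating $\chi'$ and choosing the constants $a$ and the constant of integration realizes $\chi(0)=0$, $\chi(1)=1$.

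The main obstacle is exactly the interaction between two degeneracies: the vanishing $F_w=\bar w\to 0$ at the core, where the dominating term collapses, and the non-compactness of $\mathbb{C}$ in the $w$-direction, which forbids a single uniform convexity constant. The first is handled by the explicit form $F=|w|^2$ near $\{w=0\}$, which already makes $\phi$ strictly subharmonic there; the second is handled by the compact-level-set reduction above, which lets $\beta$ (hence the required convexity of $\chi$) grow without bound as $s\to\infty$ while keeping $\chi$ smooth and increasing, which is all the lemma demands.
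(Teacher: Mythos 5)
Your proposal is correct and takes essentially the same route as the paper: the paper likewise sets $\phi=\rho\circ F$ for an increasing convex $\rho$ with $\rho(0)=0$, $\rho(1)=1$, uses the identity $\Delta_w(\rho\circ F)=\rho''|F_w|^2+\rho'\Delta_w F$ (up to normalization of $\Delta_w$), and exploits $F_w\neq 0$ off the core together with $F=|w|^2$ near $w=0$. Your write-up is in fact more quantitative than the paper's one-line conclusion, since you spell out exactly how convex $\chi$ must be (via the majorant $\beta(s)>\sup_{\{F=s\}}\bigl(-F_{w\bar w}/|F_w|^2\bigr)_+$, using compactness of level sets of a proper extension $F$), a point the paper leaves implicit.
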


\begin{proof} Let $F(\lambda, w)$ be the extension defined earlier. Let $\rho:\mathbb R_+\to \mathbb R_+$
be an increasing convex function, $\rho(0)=0,$ $\rho(1)=1$. Then $\phi=\rho\circ F$ is also an extension of a defining function of the foliation as well.

\begin{equation}\label{eq:1}
\Delta_w(\rho\circ F)=\frac14\rho''|F_w|^2+\frac14\rho'\Delta_w F
\end{equation}

Since $F'_w(\lambda, w)\neq 0$, when $w\neq 0$, so that $\Delta_w(\rho\circ F)>0$ away from a neighborhood of $w=0$.  In a neighborhood of $0$, $\Delta_w F=4$. By
taking $\rho'(0)>0$, one can insure that $\Delta(\rho\circ F)>0$.

Let us set $\phi=\rho\circ F$, then $\phi_{\lambda}^{-1}=C_{\lambda}$.
\end{proof}

\begin{lemma}\label{lem:psi} There exists a function $\psi:\bar{\Delta}\times \mathbb C\to \mathbb R\cup\{-\infty\}$, so that $\psi$ is smooth, $\Delta_{w} \psi<0$, and restriction of $\psi$ to $\mathbb S^1\times \mathbb C$ defines a foliation of $\mathbb S^1\times \mathbb C$ by $\Gamma^t$. We require that $\psi(\lambda,0)=-\infty$ for all $\lambda\in \bar{\Delta}$. We also require that for $|\lambda|=1$, $\psi_{\lambda}^{-1}(t)=C_{\lambda}$.
\end{lemma}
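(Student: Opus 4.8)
The plan is to mirror the proof of Lemma \ref{lem:phi}, taking $\psi=\sigma\circ F$ for a suitable reparametrizing function $\sigma$, but now choosing $\sigma$ to be increasing and \emph{concave} rather than convex, and forcing it to blow up to $-\infty$ at $0$. Writing $s=F(\lambda,w)$ and using the same chain-rule identity as in \eqref{eq:1},
$$\Delta_w(\sigma\circ F)=\tfrac14\,\sigma''(F)\,|F_w|^2+\tfrac14\,\sigma'(F)\,\Delta_w F,$$
so that for $w\neq 0$ (where $|F_w|^2>0$) the sign of $\Delta_w\psi$ equals that of $\sigma''(F)+\sigma'(F)\,G$, where I set $G:=\Delta_w F/|F_w|^2$. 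Since any $\sigma$ with $\sigma'>0$ leaves the level sets of $\psi$ equal to those of $F$, the foliation of $\mathbb S^1\times\mathbb C$ by the tori $\Gamma^t=F^{-1}(t)$ is automatically preserved (up to reparametrization), and the content of the lemma reduces to producing an increasing $\sigma$ on $(0,\infty)$ with $\sigma(0^+)=-\infty$ for which $\sigma''(s)+\sigma'(s)\,G<0$ holds on every level $\{F=s\}$.

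The key input is a bound on $G$ along level sets. For fixed $s>0$ the set $\{F=s\}\cap(\bar\Delta\times\mathbb C)$ is compact and misses $w=0$, so $G$ is bounded there; let $\tilde G(s)$ be a smooth positive upper bound for $\sup\{G:\ F=s,\ \lambda\in\bar\Delta\}$. The only degeneration occurs as $s\to 0$: there $F=|w|^2$, hence $|F_w|^2=|w|^2=s$ and $\Delta_w F=4$, giving $G=4/s$, so $\tilde G(s)\sim 4/s$. I then define $\sigma$ by prescribing its logarithmic derivative: set $(\log\sigma')'(s)=-(\tilde G(s)+1)$, i.e.
$$\sigma'(s)=\exp\bigl(-\textstyle\int_1^s(\tilde G(\tau)+1)\,d\tau\bigr)>0,\qquad \sigma(s)=c-\textstyle\int_s^1\sigma'(\tau)\,d\tau.$$
By construction $\sigma''/\sigma'=(\log\sigma')'=-(\tilde G+1)<-G$ on $\{F=s\}$, whence $\sigma''+\sigma'G<0$ and therefore $\Delta_w\psi=\tfrac14|F_w|^2(\sigma''+\sigma'G)<0$ off the zero section.

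Finally I would verify the blow-up and normalization. Because $\tilde G(s)\sim 4/s$ near $0$, one gets $\sigma'(s)\gtrsim s^{-4}$ there, which is \emph{not} integrable at $0$; hence $\int_0^1\sigma'=+\infty$ and $\sigma(0^+)=-\infty$, so $\psi(\lambda,0)=\sigma(F(\lambda,0))=\sigma(0^+)=-\infty$ as required, while $\psi$ is smooth on $\{w\neq 0\}$. Since $\sigma$ is a strictly increasing bijection of $(0,\infty)$ onto an interval, its level sets coincide with those of $F$, and adjusting the constant $c$ fixes the normalization $\psi_\lambda^{-1}(t)=C_\lambda$ for $|\lambda|=1$. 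The main obstacle is precisely this degeneration at the origin: the helpful concavity term $\sigma''|F_w|^2$ vanishes (as $|F_w|^2=|w|^2\to 0$) exactly where the harmful term $\sigma'\Delta_w F$ persists ($\Delta_w F=4$), so $\sigma$ cannot merely be taken strongly concave near $0$ — its concavity must diverge at a definite rate. Pleasantly, the non-integrability of $\sigma'$ forced by this balance is the very condition that produces the required pole $\psi(\lambda,0)=-\infty$.
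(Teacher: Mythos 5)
Your construction is correct, and it is the same basic strategy as the paper's: post-compose the defining function $F$ with an increasing concave reparametrization so that the composition becomes strictly superharmonic in $w$. The difference is in how the degeneration at the zero section is handled, and here your route is genuinely distinct. The paper takes $\psi=c\,\rho\circ\ln F$ with $\rho$ increasing, concave, $\rho(-\infty)=-\infty$: since $F=|w|^2$ near $w=0$, the function $\ln F=2\ln|w|$ is harmonic in $w$ there, so the dangerous term $\rho'\,\Delta_w(\ln F)$ vanishes identically near the origin, any strictly concave $\rho$ works, and the pole $\psi(\lambda,0)=-\infty$ comes for free from the logarithm. You instead work with $\sigma\circ F$ directly, accept the exact blow-up $G=\Delta_wF/|F_w|^2\sim c/s$ of the harmful term, and defeat it with an integrating-factor choice $(\log\sigma')'=-(\tilde G+1)$ against a majorant $\tilde G$ of $G$ on level sets. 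In fact the paper's construction is precisely the borderline case of your differential inequality: $\sigma''/\sigma'=-c/s$ integrates to $\sigma=\mathrm{const}\cdot\ln s$, so the family $\rho\circ\ln$ consists exactly of the "at least logarithmically concave" reparametrizations, and your closing observation — that the forced non-integrability of $\sigma'$ at $0$ is what produces $\psi(\lambda,0)=-\infty$ — is the quantitative explanation of why the logarithm gives the pole automatically. Two small caveats, which apply equally to the paper's own (terser) proof: finiteness of your majorant $\tilde G(s)$ requires the level sets of the extended $F$ in $\bar\Delta\times\mathbb C$ to be compact, i.e. the extension of $F$ should be chosen proper in $w$, an assumption left implicit in the paper as well; and the coefficients in \eqref{eq:1} depend on the normalization of $\Delta_w$ (under the standard convention the estimate $\sigma'(s)\gtrsim s^{-4}$ becomes $\sigma'(s)\gtrsim s^{-1}$), but either way $\sigma'$ is non-integrable at $0$, so no step of your argument is affected.
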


\begin{proof}
Consider a function $\psi=c \rho\circ \ln F$, where $\rho$ is increasing, concave function, $\rho(-\infty)=-\infty$.

$$\Delta_w(\rho\circ\ln F)=\frac14\rho''\frac{|F_w|^2}{F^2}+\frac14\rho' \Delta_w (\ln F)$$

Since $F'_w\neq 0$ when $w\neq 0$, we can make $\Delta_w(\rho\circ \ln F)<0$. In a neighborhood of $w=0$, $\Delta_w(\ln F)=0$, therefore 
$\Delta_w (\rho\circ \ln F)<0$. By choosing a constant $c$, we can ensure that $\psi_{\lambda}^{-1}(1)=C_{\lambda}$.

\end{proof}

Let $T\Gamma$ be the tangent space of the graphical torus $\Gamma$. Let $K_{\alpha}\subset T\Gamma$ be the cone field:

$$K_{\alpha}:=\{(p, v):\, v\in T_pT, (v,\eta_p)>\alpha(v, \frac{\partial }{\partial \theta})\}.$$
$$K^{\circ}_{\alpha}:=\{(p,v)\in K_{\alpha}:\, v\neq c \eta_p, c\in \mathbb R \}$$

\begin{lemma} \label{lem:pseudo} For a graphical torus $\Gamma$, there exist a family of pseudoconvex domains $\Omega_{\epsilon}$, $\epsilon\in [-\delta, 0)\cup(0,\delta]$ and $\alpha>0$, so that $\Gamma\subset \partial \Omega_{\epsilon}$ and characteristic directions $\chi(T,\Omega_{\epsilon})$ fill $K^{\circ}_{\alpha}$.
\end{lemma}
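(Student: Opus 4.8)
The plan is to build the family $\Omega_\epsilon$ by combining the two auxiliary functions $\phi$ and $\psi$ from Lemmas \ref{lem:phi} and \ref{lem:psi}, whose sublevel sets already capture the torus $\Gamma$ correctly on $\mathbb S^1\times\mathbb C$ and whose vertical Laplacians have opposite signs ($\Delta_w\phi>0$, $\Delta_w\psi<0$). The point is that $\phi$ and $\psi$ each define the same foliation $\{\Gamma^t\}$ on the Levi-flat boundary $\mathbb S^1\times\mathbb C$, but curve the boundary hypersurface in opposite vertical directions inside $\bar\Delta\times\mathbb C$. Concretely, I would set $\rho_\epsilon := \phi + \epsilon\,(\text{correction})$ for $\epsilon>0$ using $\phi$, and a parallel construction from $\psi$ for $\epsilon<0$, arranging in each case that $\{\rho_\epsilon=1\}$ still contains $\Gamma$ while the defining function is strictly plurisubharmonic near $\Gamma$. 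The sign of $\Delta_w$ controls the Levi form in the vertical complex direction, and tilting by $\epsilon$ rotates the complex tangent plane $H_p\partial\Omega_\epsilon$ so that its intersection with $T\Gamma$—the characteristic direction—sweeps out a wedge.

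First I would fix, for each $p\in\Gamma$, the decomposition $T_p\Gamma = \langle \partial/\partial\theta\rangle \oplus \langle \eta_p\rangle$, where $\partial/\partial\theta$ is the horizontal (base) direction tangent to $\partial\Delta$ and $\eta_p$ is tangent to the vertical slice $C_\lambda$. Next I would compute the characteristic direction $\chi(T,\Omega_\epsilon)$ explicitly as $H_p\partial\Omega_\epsilon\cap T_p\Gamma$ in terms of the defining function. At $\epsilon=0$ the boundary is Levi-flat and the complex tangent space meets $T\Gamma$ exactly along $\eta_p$, so the characteristic direction is $\eta_p$ itself; this is the anchor of the wedge. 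I would then verify that $\partial_\epsilon \chi(T,\Omega_\epsilon)|_{\epsilon=0}$ has a nonzero $\partial/\partial\theta$-component, so that turning on $\epsilon$ of either sign tilts the characteristic line away from $\eta_p$ into $K_\alpha^\circ$, with the two signs giving the two sides of the wedge. Plurisubharmonicity for small $|\epsilon|$ follows as in Lemma \ref{lem:family_pseudoconvex}: the vertical curvature supplied by $\Delta_w\phi>0$ (resp. $\Delta_w\psi<0$, after the appropriate orientation) dominates the small perturbation, keeping the full complex Hessian positive definite on a neighborhood of $\Gamma$.

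The main obstacle is the behavior near the puncture $w=0$ and the passage between the two one-sided families. Because $\Gamma$ winds around $0$ and the disks must avoid the axis $w=0$, the defining functions degenerate there ($\psi(\lambda,0)=-\infty$), and I must ensure both that $0\notin\partial\Omega_\epsilon$ in the relevant region and that the perturbed functions remain genuinely plurisubharmonic uniformly up to $\Gamma$ despite this degeneration. Matching the constructions for $\epsilon\in[-\delta,0)$ and $\epsilon\in(0,\delta]$ so that the union of characteristic directions covers all of $K_\alpha^\circ$—rather than leaving a gap or overshooting the cone—requires a careful, quantitative choice of the convex/concave reparametrizations $\rho$ and of the tilt term, which is where the estimate $\alpha>0$ is actually extracted. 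Once the first-order tilt is shown to be transverse to $\eta_p$ uniformly in $p$ (using compactness of $\Gamma$ and the nondegeneracy $F'_w\neq0$), a continuity and intermediate-value argument over $\epsilon$ yields that the characteristic fields fill the punctured cone $K_\alpha^\circ$, completing the proof.
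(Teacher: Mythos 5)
Your overall strategy matches the paper's: build two one-sided families of strictly pseudoconvex domains from $\phi$ (Lemma \ref{lem:phi}) and $\psi$ (Lemma \ref{lem:psi}), each containing $\Gamma$ in its boundary, so that the characteristic directions tilt off $\eta_p$ to the two sides of the wedge. But the mechanism you propose for plurisubharmonicity is backwards, and as stated it fails. You set $\rho_\epsilon=\phi+\epsilon\,(\text{correction})$ and argue that the vertical curvature supplied by $\Delta_w\phi>0$ dominates the small perturbation, keeping the full complex Hessian positive definite. The hypothesis $\Delta_w\phi>0$ controls only the $w\bar w$-entry of the $2\times 2$ complex Hessian of $\phi$; nothing in Lemma \ref{lem:phi} controls $\phi_{\lambda\bar\lambda}$ or the off-diagonal terms, so $\phi$ itself is in general not plurisubharmonic and no small perturbation of it will be. The appeal to Lemma \ref{lem:family_pseudoconvex} does not help either, since that lemma perturbs a domain that is already strictly pseudoconvex, whereas here the ambient hypersurface $\{|\lambda|=1\}\times\mathbb C$ containing $\Gamma$ is only Levi-flat.

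The paper's construction is $\omega_{\epsilon}=\frac{1}{\epsilon}(|\lambda|^2-1)+\phi$ with $\Omega_{\epsilon}=\{\omega_{\epsilon}<1\}$, and $\sigma_{\epsilon}=\frac{1}{\epsilon}(|\lambda|^2-1)-\psi$ with $\Sigma_{\epsilon}=\{\sigma_{\epsilon}<-1\}$ for the other side of the wedge. The term $\frac{1}{\epsilon}(|\lambda|^2-1)$ vanishes on $|\lambda|=1$, so $\Gamma$ stays in $\partial\Omega_{\epsilon}$, and its coefficient is \emph{large}, not small: the $(1,1)$-entry $\frac{1}{\epsilon}+\phi_{\lambda\bar\lambda}$ of the Hessian then dominates, and together with $\Delta_w\phi>0$ (resp.\ $\Delta_w(-\psi)>0$) it makes the Hessian positive definite for small $\epsilon$. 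In other words, $\Omega_{\epsilon}$ is a small deformation of the Levi-flat domain $\{|\lambda|<1\}$, not of a sublevel set of $\phi$; the roles of base and perturbation in your write-up must be exchanged. Your remaining worries are largely beside the point for this lemma: the behavior at $w=0$ matters only for trapping the disks inside $\Omega_{\epsilon}$ and $\Sigma_{\epsilon}$ (handled by Lemma \ref{lem:disk_trapping}, since these domains converge to $\{|\lambda|<1\}$), and no delicate matching of the two one-sided families is required --- one only needs the characteristic directions to sweep an open wedge on each side of $\eta_p$, which follows from the first-order tilt you correctly identify at $\epsilon=0$.
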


\begin{proof} Take
$$\omega_{\epsilon}:=\frac{1}{\epsilon}(|\lambda|^2-1)+\phi,$$
where $\phi$ is a function constructed in Lemma \ref{lem:phi}.

$$\mbox{Hess}\,\omega_{\epsilon}=\left( \begin{array}{ll}\frac{1}{\epsilon}+\frac{\partial^2\phi}{\partial \lambda\partial
\overline{\lambda}} & \frac{\partial^2\phi}{\partial w\partial\overline{\lambda}}\\ \frac{\partial^2\phi}{\partial\overline{w}\partial\lambda} & \Delta_w\phi\end{array}\right)$$

For small enough $\epsilon$, the Hessian is positive definite, so the function $\omega_{\epsilon}$ is strictly plurisubharmonic.
The domains
$$\Omega_{\epsilon}=\{(\lambda, w):\,\omega_{\epsilon}(\lambda, w)< 1\}.$$
are strictly pseudoconvex for small $\epsilon$.

Let $D$ be a holomorphic disk with the trace in $\Gamma$.
The domains $\Omega_{\epsilon}$ converge to
$|\lambda|<1$.  Therefore, by Lemma~\ref{lem:disk_trapping}, the disk $D$ is trapped in
$\Omega_{\epsilon}$ for all small enough $\epsilon$.

For small $\epsilon$, the function
$$\sigma_{\epsilon}(\lambda,w):=\frac{1}{\epsilon}(|\lambda|^2-1)-\psi$$
is strictly plurisubharmonic.  By the same reasoning, the disks are trapped in
$$\Sigma_{\epsilon}=\{(\lambda, w):\, \sigma_{\epsilon}<-1\}$$
when $\epsilon$ is sufficiently small.
\end{proof}

\begin{proof}[Proof of Theorem \ref{te:anlgle_estimate2}]
By Lemma \ref{lem:transv_ch}, the tangent $T_p\gamma$ is transverse to characteristic directions. Therefore, the angle estimate follows.
\end{proof}

\begin{corollary} \label{cor:apriori} Let $g:\Delta\to \mathbb C$ define a holomorphic disk with the trace in $\Gamma$, $g(\lambda)\neq 0$ for $\lambda\in \Delta$. Assume that  $g\in C^1(\bar{\Delta}).$  Then there exists $C$ depending only on $\Gamma$ such
that $|g'(\lambda)|<C$ for all $\lambda\in \bar{\Delta}$. The derivative estimate stays valid for graphical tori that are
small perturbations of $\Gamma$.
\end{corollary}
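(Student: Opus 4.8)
The plan is to read off the corollary directly from the angle estimate of Theorem~\ref{te:anlgle_estimate2}, converting the transversality between $T_p\gamma$ and $\eta_p$ into a pointwise bound on $|g'|$ along $\partial\Delta$, and then to propagate that bound into the interior by the maximum principle. To begin, I would parametrize the trace by $\gamma(\theta)=(e^{i\theta},g(e^{i\theta}))$, so that
$$\gamma'(\theta)=\left(ie^{i\theta},\,ie^{i\theta}g'(e^{i\theta})\right).$$
The horizontal projection $\pi_*\gamma'(\theta)=ie^{i\theta}$ has modulus $1$, and it agrees with the horizontal projection of the field $\partial/\partial\theta$ tangent to $\Gamma$, while $\eta_p$ (tangent to the vertical slice $C_\lambda$) projects to $0$. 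Since $\Gamma$ is graphical, $\{\partial/\partial\theta,\eta_p\}$ is a uniformly transverse basis of $T_p\Gamma$, and comparing horizontal projections pins the $\partial/\partial\theta$-coefficient of $\gamma'(\theta)$ to be identically $1$; thus $\gamma'(\theta)=\partial/\partial\theta+b\,\eta_p$ for a real scalar $b=b(\theta)$.

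The key step is that the angle bound controls $b$. As $|b|\to\infty$ the vector $\partial/\partial\theta+b\,\eta_p$ aligns with $\eta_p$ and the angle $\angle(\eta_p,T_p\gamma)$ tends to $0$; using the uniform transversality of $\partial/\partial\theta$ and $\eta_p$, the lower bound $\angle(\eta_p,T_p\gamma)>\alpha$ from Theorem~\ref{te:anlgle_estimate2} forces $|b|<B$ for a constant $B=B(\alpha,\Gamma)$. Now I would read off the fiber component: the fiber part of $\partial/\partial\theta$ is bounded by the drift rate of the slices $C_\lambda$ in $\lambda$ (a smooth invariant of $\Gamma$), and the fiber part of $\eta_p$ is bounded since $\eta_p$ is a unit tangent to $C_\lambda$, so the fiber component $ie^{i\theta}g'(e^{i\theta})$ of $\gamma'(\theta)=\partial/\partial\theta+b\,\eta_p$ has modulus at most some $C_0=C_0(\Gamma)$. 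Since $|ie^{i\theta}g'|=|g'|$, this gives $|g'(\lambda)|<C_0$ for every $\lambda\in\partial\Delta$. Because $g\in C^1(\bar\Delta)$, its derivative $g'$ is holomorphic in $\Delta$ and continuous on $\bar\Delta$, so the maximum modulus principle upgrades the boundary bound to $\sup_{\bar\Delta}|g'|=\sup_{\partial\Delta}|g'|<C_0$, which is the claimed estimate with $C=C_0$.

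The step I expect to be the main obstacle is the bookkeeping of the second paragraph: making rigorous that the $\partial/\partial\theta$-coefficient is pinned to $1$ and that the angle with $\eta_p$ alone controls the \emph{entire} fiber velocity. This is precisely where the graphical structure of $\Gamma$ (the slices $C_\lambda$ lying over fixed $\lambda$, so that $\pi_*\eta_p=0$ while $\pi_*\gamma'\ne 0$) is doing the real work, and care is needed so that the constants extracted are uniform in $p\in\Gamma$. For the perturbation statement I would observe that the angle constant $\alpha$ of Theorem~\ref{te:anlgle_estimate2}, together with the geometric bounds on the fiber parts of $\partial/\partial\theta$ and $\eta_p$, all depend continuously on the data defining $\Gamma$; moreover the strictly pseudoconvex trapping domains $\Omega_\epsilon,\Sigma_\epsilon$ of Lemma~\ref{lem:pseudo} persist under $C^2$-small perturbations of the defining functions and continue to trap the disks by Lemma~\ref{lem:disk_trapping}. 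Hence the whole chain of estimates survives with a single uniform constant $C$ over a neighborhood of $\Gamma$.
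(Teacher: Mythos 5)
Your proposal is correct and follows essentially the same route as the paper's proof: reduce to the boundary via the maximum modulus principle applied to the holomorphic function $g'$, then use the angle estimate of Theorem~\ref{te:anlgle_estimate2} to bound the component of the trace's velocity along $C_\lambda$ (your coefficient $b$ is exactly the paper's $u'_\theta$ in its adapted coordinates $u,v,\theta,r$), which bounds the fiber velocity and hence $|g'|$ on $\partial\Delta$. Your write-up is in fact more careful than the paper's (which omits the drift term of $\partial/\partial\theta$ and gives no argument for the perturbation statement), but it is the same argument.
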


\begin{proof} It is enough to estimate $g'(\lambda)$ for $|\lambda|=1$. Then $\lambda=e^{i\theta}$,
so $|g'_{\lambda}|=|g'_{\theta}|$. Let $u,v,\theta,r$ be an
orthornormal system of coordinates in a neighborhood of $\Gamma$. We
assume that $u|_{\Gamma}$ is a coordinate along $C_{\lambda}$ and $v,u$
are coordinates in $\lambda=const$ plane. Then
$g'_{\theta}=u'_{\theta}$ and the angle estimate implies that
$|u'_{\theta}|$ is uniformly bounded from below.
\end{proof}



\section{Proof of the Filling Theorem}\label{sec:proof}

The proof is by continuity method. At many points we follow the treatment of \cite{AM}. For each $\lambda\in \mathbb
S^1$ we can foliate interior of $C^t_{\lambda}\backslash
\{0\}$ by simple smooth curves $C_{\lambda}^s$, $s\in (0,t)$ so that
\begin{enumerate}
\item $C^s_{\lambda}=\{|z|=s\}$ for $s\leq \epsilon$;
\item $C^s_{\lambda}$ depend smoothly on $\lambda$.
\end{enumerate}

Let
$$\Gamma^t=\{(\lambda,w):\, \lambda\in \mathbb S^1, w\in
C^t_{\lambda}\}$$
$$\Gamma^0=\{(\lambda, 0):\, \lambda\in \mathbb S^1\}$$

$\Gamma^t$ by definition is a smooth family of graphical tori, ${\Gamma}^1=\Gamma$.
For $t\leq \epsilon$, the tori $\Gamma^t$ are foliated by the vertical
leaves $w=\mbox{const}$. We will prove that the set $S$ of
parameters $t$ such that $\Gamma^t$ is foliated is open and closed in
$[0,1]$, so $S=[0,1]$, and the torus $\Gamma$ is foliated. Moreover, we
will prove that the foliation is unique in the strong sense.

Let $F:\mathbb S^1\times \mathbb C\to \mathbb R$ be a
defining function of the foliations $C^t_{\lambda}$. For each
fixed $\lambda$,
$$C^t_{\lambda}=\{(\lambda, w): F(\lambda, w)=t\}.$$
The function $F$ depends smoothly on $\lambda$. We assume that
$F'_{w}(\lambda, w)\neq 0$ for $w\neq 0$, $\lambda\in \mathbb
S^1.$

\begin{lemma} \label{lem:winding_number} Assume that the winding
number of a curve $\{\gamma(\lambda):\lambda\in \mathbb
S^1_{\lambda}\}$ around $0$ is equal to zero. Then the winding
number of the curve $\{F'_w(\lambda,
\gamma(\lambda)):\,\lambda\in\mathbb S^1\}$ around $0$
is equal to zero.
\end{lemma}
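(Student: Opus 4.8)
The plan is to exploit homotopy invariance of the winding number together with the normalization $F(\lambda,w)=|w|^2$ near $w=0$. The essential observation is that, since $F$ is smooth and $F'_w(\lambda,w)\neq 0$ for all $w\neq 0$ and all $\lambda\in\mathbb S^1$, the assignment $(\lambda,w)\mapsto F'_w(\lambda,w)$ is a continuous map from $\mathbb S^1\times(\mathbb C\setminus\{0\})$ into $\mathbb C\setminus\{0\}$. Consequently, whenever $\lambda\mapsto(\lambda,\gamma_s(\lambda))$ is a continuous family of loops avoiding $0$, the loops $\lambda\mapsto F'_w(\lambda,\gamma_s(\lambda))$ form a homotopy in $\mathbb C\setminus\{0\}$, so their winding numbers about $0$ all coincide.

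First I would use the hypothesis that $\gamma$ has winding number $0$ about $0$. Since $\pi_1(\mathbb C\setminus\{0\})\cong\mathbb Z$ is detected by the winding number, the loop $\lambda\mapsto\gamma(\lambda)$ is null-homotopic in $\mathbb C\setminus\{0\}$; hence there is a homotopy $\gamma_s$, $s\in[0,1]$, with $\gamma_0=\gamma$, with $\gamma_s(\lambda)\neq 0$ throughout, and with $\gamma_1\equiv w_0$ constant. Composing with a further scaling homotopy (again avoiding $0$), I may arrange $|w_0|\le\epsilon$.

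Next I would transport this homotopy through $F'_w$. By the observation above, the winding number of $\lambda\mapsto F'_w(\lambda,\gamma(\lambda))$ about $0$ equals that of the constant-slice loop $\lambda\mapsto F'_w(\lambda,w_0)$. Finally, because $F(\lambda,w)=|w|^2$ for $|w|\le\epsilon$, we have $F'_w(\lambda,w_0)=\overline{w_0}$, which is independent of $\lambda$; a constant loop has winding number $0$. This gives the claim.

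The argument is purely topological, and I do not anticipate a serious obstacle; the only points needing care are that the homotopy $\gamma_s$ stays in $\mathbb C\setminus\{0\}$ (immediate from $\pi_1(\mathbb C\setminus\{0\})=\mathbb Z$) and that the reduction lands in the region $|w|\le\epsilon$ where $F=|w|^2$. As an equivalent bookkeeping, one may note that $\mathbb S^1\times(\mathbb C\setminus\{0\})$ is homotopy equivalent to a torus with $H_1=\mathbb Z^2=\langle[a],[b]\rangle$, that the graph of $\gamma$ represents $[a]+n[b]$ with $n$ the winding number of $\gamma$, and that $(F'_w)_*[a]=0$ (computed on a slice $|w_0|\le\epsilon$, where $F'_w=\overline{w_0}$) while $(F'_w)_*[b]=-1$ (from $F'_w=\overline w$ near $0$); with $n=0$ the image class is $0$, i.e. winding number $0$.
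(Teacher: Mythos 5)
Your proof is correct and takes essentially the same approach as the paper's: contract $\gamma$ to a constant loop in $\mathbb{C}\setminus\{0\}$ (possible since its winding number is zero) and use homotopy invariance of the winding number of $\lambda\mapsto F'_w(\lambda,\gamma_s(\lambda))$, which remains well defined throughout because $F'_w\neq 0$ away from $w=0$. You are in fact more complete than the paper at the final step: where the paper simply asserts the limit value ``is equal to zero,'' you supply the needed computation---slide the constant $w_0$ into $\{|w|\le\epsilon\}$, where $F(\lambda,w)=|w|^2$ gives $F'_w(\lambda,w_0)=\overline{w_0}$, a constant loop---and this step is genuinely necessary, since for a general smooth $F$ with $F'_w\neq 0$ off $w=0$ (but without the radial normalization near $w=0$) the conclusion can fail.
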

\begin{proof} There is a homotopy of the curve $\gamma,$ $G:\gamma\times [0,1]\to \mathbb C\backslash\{0\}$ so that $G(\gamma\times\{0\})=\gamma$, $G(\gamma\times\{1\})=\mbox{const}$. The winding
number of the curves $\{F'_w(\lambda, \gamma^t(\lambda)):\, \lambda
\in \mathbb S^1\}$ around $0$ is well defined, so it stays constant. Hence, it
is equal to zero.
\end{proof}

\subsection{Regularity} 

\begin{theorem}\label{te:regularity} Let $\Gamma$ be a graphical torus. Let $g:\bar{\Delta}\to \mathbb C$ be a function that defines a holomorphic disk with the trace $g(\partial \Delta) \in \Gamma.$ Assume $g'\in L^{\infty}(\Delta)$, $g\neq 0$ $\forall \lambda\in \Delta$. Then $g\in C^{1,\alpha}(\bar{\Delta})$, $0<\alpha<1$.
\end{theorem}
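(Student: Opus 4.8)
\emph{The plan} is to differentiate the boundary constraint along $\mathbb{S}^1$, recognize the result as a linear Riemann--Hilbert problem for the bounded holomorphic function $G(\lambda):=i\lambda g'(\lambda)$, and then solve that problem explicitly through the Hilbert transform, thereby bootstrapping $g'$ from $L^\infty$ up to $C^\alpha$.

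First I would record the boundary condition. Since $g'\in L^\infty(\Delta)$, the holomorphic function $g$ is Lipschitz on $\bar{\Delta}$ (the disk is convex, so $|g(\lambda_1)-g(\lambda_2)|\le \|g'\|_\infty|\lambda_1-\lambda_2|$), and hence $h(\theta):=g(e^{i\theta})$ is Lipschitz. The trace lies on $\Gamma$, a level set of the smooth defining function $F$, so $F(e^{i\theta},h(\theta))\equiv\mathrm{const}$; moreover $h(\theta)\in C_{e^{i\theta}}\subset\mathbb{C}\setminus\{0\}$, so $h$ never vanishes. Differentiating the constraint in $\theta$ and using that $F$ is real-valued gives, a.e.,
\begin{equation}
\mathrm{Re}\big[\,F_w(e^{i\theta},h)\,h'(\theta)\,\big]=-\,\mathrm{Re}\big[\,F_\lambda(e^{i\theta},h)\,ie^{i\theta}\,\big].
\end{equation}
Here $h'(\theta)=ie^{i\theta}g'(e^{i\theta})$ is the boundary value of the bounded holomorphic function $G=i\lambda g'$. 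Writing $a(\theta):=F_w(e^{i\theta},h(\theta))$ and $\beta(\theta):=-\mathrm{Re}[F_\lambda(e^{i\theta},h)\,ie^{i\theta}]$, this is the Riemann--Hilbert problem $\mathrm{Re}[\,a(\theta)\,G(e^{i\theta})\,]=\beta(\theta)$ for the bounded holomorphic $G$. Because $F$ is smooth and $h$ is Lipschitz, both $a$ and $\beta$ are Lipschitz, hence lie in $C^\alpha(\mathbb{S}^1)$; and $a$ is nonvanishing, since $F'_w\neq0$ off $w=0$ and $h\neq0$.

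The decisive point is the index of the coefficient $a$. As $g$ is holomorphic on $\Delta$ and nonvanishing on all of $\bar{\Delta}$, the winding number of $\theta\mapsto h(\theta)=g(e^{i\theta})$ about $0$ equals the number of zeros of $g$ in $\Delta$, namely $0$. Lemma \ref{lem:winding_number} then forces the winding number of $a(\theta)=F_w(e^{i\theta},h(\theta))$ about $0$ to be $0$ as well. With index $0$, the argument $\phi(\theta):=\arg a(\theta)$ is a single-valued function in $C^\alpha(\mathbb{S}^1)$, and I can factor the coefficient: let $w_0$ be the holomorphic function on $\Delta$ whose imaginary part has boundary values $-\phi$ (harmonic extension plus conjugate), and set $E:=e^{w_0}$. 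By the boundedness of the Hilbert transform on $C^\alpha$, both $E$ and $E^{-1}$ lie in $C^\alpha(\bar{\Delta})$ and are nonvanishing, while on $\mathbb{S}^1$ the product $\Phi:=aE$ is real and strictly positive.

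Finally I would divide out $E$. Since $\Phi$ is real on $\mathbb{S}^1$,
\begin{equation}
\mathrm{Re}\big[\,E^{-1}G\,\big]=\frac{\beta}{\Phi}\quad\text{on }\mathbb{S}^1,
\end{equation}
with $\beta/\Phi\in C^\alpha(\mathbb{S}^1)$, and $E^{-1}G$ bounded holomorphic. A bounded holomorphic function is recovered from the $C^\alpha$ boundary values of its real part via the harmonic extension and its conjugate, so the $C^\alpha$ bound on $H$ yields $E^{-1}G\in C^\alpha(\bar{\Delta})$, whence $G=E\,(E^{-1}G)\in C^\alpha(\bar{\Delta})$. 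Dividing by the smooth nonvanishing factor $i\lambda$ gives $g'\in C^\alpha(\bar{\Delta})$, i.e. $g\in C^{1,\alpha}(\bar{\Delta})$. \emph{The main obstacle} is precisely this Riemann--Hilbert step: confirming that the index is $0$ (so that $\log a$ is single-valued and the factor $E$ exists), and then arranging the harmonic-conjugate bookkeeping carefully enough that the $C^\alpha$ boundedness of $H$ actually converts the a.e. $L^\infty$ identity into the genuine gain $g'\in C^\alpha$.
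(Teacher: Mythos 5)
Your proposal is correct and follows essentially the same route as the paper's proof: differentiate the defining equation $F(\lambda,g(\lambda))=\mathrm{const}$ along $\partial\Delta$, use Lemma \ref{lem:winding_number} to see that the coefficient $F_w(\lambda,g(\lambda))$ has index zero, and then remove its argument by the holomorphic factor built from the Hilbert transform so that $\lambda g'$ times a nonvanishing $C^{\alpha}$ holomorphic function has $C^{\alpha}$ real (resp.\ imaginary) part on the boundary, whence it is $C^{\alpha}$ on $\bar{\Delta}$. Your explicit Riemann--Hilbert phrasing with the factor $E=e^{w_0}$ is just a cleaner packaging of the paper's substitution $F_w=e^{a+ib}$ and multiplication by $e^{Hb-ib}$.
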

\begin{proof} We include $\Gamma$ into a family of graphical tori $\Gamma^t$ with $\Gamma^1=\Gamma$. Let $F:\mathbb S^1\times \mathbb C\to \mathbb R$ be a defining function for $\Gamma^t$, $F^{-1}(t)=\Gamma^t$. 
Since the trace of $g$ is in $\Gamma$ we have equation:

\begin{equation}\label{eq:def}
F(\lambda,g(\lambda))=1. 
\end{equation}

Let $\lambda=e^{i\theta}.$ Since $g'\in L^{\infty}(\Delta)$, the bounded radial limits exist almost everywhere. The function $g$ extends to be $C^{\alpha}$ on the closed disk, and the partial derivative $g_{\theta}$ exist a.e. We differentiate equation (\ref{eq:def}) a.e. with respect to $\theta$ and obtain: 
\begin{equation}\label{eq:der}\lambda i F_{\lambda}(\lambda,g(\lambda))-\mbox{Im}\left(F_w(\lambda,g(\lambda))g'(\lambda)\lambda\right)=0.\end{equation}

The winding number of $\{g(\lambda):\lambda\in \mathbb S^1\}$ around $0$ is zero, and by Lemma \ref{lem:winding_number}, the winding number of $\{F_w(\lambda,g(\lambda)):\,\lambda \in\mathbb S^1\}$ around $0$ is zero as well. Thus we can take the logarithm and obtain 
$$F_w(\lambda, g(\lambda))=e^{a(\lambda)+ib(\lambda)}.$$
The left hand-side is $\alpha$-H\"{o}lder continuous, so $b(\lambda)$ is $\alpha$-H\"{o}lder continuous function, and so is its Hilbert transform $Hb(\lambda)$. Thus equation (\ref{eq:der}) becomes
$$\mbox{Im} \left(\lambda e^{Hb(\lambda)-ib(\lambda)} g'(\lambda)\right)=e^{-a(\lambda)}F'_{\lambda}(\lambda, g(\lambda))\lambda$$ for almost every $\theta$. Since the right hand side is $C^{\alpha}$, so is the
left hand side. Further the left hand side is the imaginary part of an analytic function 
so the function  
$\lambda e^{Hb(\lambda)-ib(\lambda)} g'(\lambda)$ itself is $C^{\alpha}$. Therefore, $g'\in C^{\alpha}(\bar{\Delta})$.
\end{proof}

\subsection{Openness}

In \cite{B}, the stability of foliation by holomorphic disks is proved if one starts from the standard torus.

\begin{theorem}\label{te:open} Let $\Gamma^t$ be a family of graphical tori, $t\in
[0,\infty)$. Assume that a function $g^{t_0}:\bar{\Delta}\times \mathbb
S^1\to \mathbb C$ defines a holomorphic transverse foliation of a
graphical torus $\Gamma^{t_0}$. Then there exists $\delta$ and
a function $\tilde{g}:\bar{\Delta}\times \mathbb S^1\times
(t_0-\delta,t_0+\delta)\to \mathbb C$ that defines a transverse
holomorphic foliation of $\Gamma^t$ for $|t-t_0|<\delta$.
\end{theorem}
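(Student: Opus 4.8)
The plan is to prove openness by the implicit function theorem in H\"older spaces, treating the disks of the foliation as solutions of a nonlinear Riemann--Hilbert problem and continuing them in the parameter $t$. Let $F:\mathbb{S}^1\times\mathbb{C}\to\mathbb{R}$ be a defining function with $F^{-1}(t)=\Gamma^t$, and let $X$ denote the Banach space of functions $g\in C^{1,\alpha}(\bar{\Delta},\mathbb{C})$ that are holomorphic in $\Delta$. A disk with trace in $\Gamma^t$ is precisely a solution $g\in X$ of the boundary equation $F(\lambda,g(\lambda))=t$ on $\mathbb{S}^1$, so I would study the map $\Phi(g,t):=F(\cdot,g(\cdot))-t$, which sends $X\times\mathbb{R}$ into $C^{1,\alpha}(\mathbb{S}^1,\mathbb{R})$ and vanishes at each leaf $(g^{t_0}_\xi,t_0)$ of the given foliation.

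First I would compute the linearization in $g$. Since $F$ is real valued, differentiating gives $D_g\Phi(g,t)\dot{g}=2\,\mathrm{Re}\!\left(F_w(\lambda,g(\lambda))\,\dot{g}(\lambda)\right)$ for holomorphic $\dot{g}\in X$, a linear Riemann--Hilbert operator with coefficient $F_w(\lambda,g(\lambda))$. Exactly as in the proof of Theorem \ref{te:regularity}, the trace $\gamma=g(\partial\Delta)$ has winding number $0$ around $0$ because $g$ is holomorphic and nonvanishing on $\Delta$, so by Lemma \ref{lem:winding_number} the coefficient $F_w(\lambda,g(\lambda))$ has winding number $0$; writing $F_w=e^{a+ib}$ with $b$ single valued, the boundedness of the Hilbert transform makes the associated Riemann--Hilbert problem explicitly solvable. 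For winding number $0$ this operator is Fredholm of index $1$: it is surjective onto $C^{1,\alpha}(\mathbb{S}^1,\mathbb{R})$ with a one--real--dimensional kernel. The kernel is precisely the infinitesimal motion along the foliation parameter $\xi$, so after imposing one real normalization $N(g)=\xi$ transverse to it (for instance fixing the arclength position of $g(1)$ on $C_1^t$), the combined operator $\dot{g}\mapsto\left(D_g\Phi\,\dot{g},\,DN\,\dot{g}\right)$ becomes an isomorphism.

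With this isomorphism in hand, the implicit function theorem applied to $(\Phi,N)$ produces, for each $\xi$ and each $t$ with $|t-t_0|<\delta$, a unique solution $g^t_\xi$ near $g^{t_0}_\xi$ solving $F(\lambda,g^t_\xi(\lambda))=t$ with $N(g^t_\xi)=\xi$; moreover $g^t_\xi$ depends smoothly on $(\xi,t)$ because $g^{t_0}_\xi$ does and $F$ is smooth. Setting $\tilde{g}(\lambda,\xi,t)=g^t_\xi(\lambda)$ gives the candidate family, and it remains to verify that it is an honest holomorphic transverse foliation of $\Gamma^t$. The conditions (3)--(5) in the definition are open: nonvanishing $g^t_\xi\neq 0$ persists by the argument principle and continuity, while simplicity and disjointness of distinct leaves hold at $t_0$ and survive for small $\delta$ because the uniform angle estimate of Theorem \ref{te:anlgle_estimate2} and the derivative bound of Corollary \ref{cor:apriori} remain valid for the small perturbations $\Gamma^t$, by the final sentence of that corollary. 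Disjointness of disks belonging to different tori $\Gamma^{t_1},\Gamma^{t_2}$ follows since the tori are themselves disjoint and the disks stay trapped by Lemma \ref{lem:disk_trapping}.

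I expect the main obstacle to be the linear step: verifying that the Riemann--Hilbert linearization has index exactly $1$ and becomes an isomorphism after a single real normalization. This is where the winding--number computation of Lemma \ref{lem:winding_number} and the boundedness of the Hilbert transform enter, and where one must match the dimension of the kernel to the one--parameter family $\{\gamma_\xi:\xi\in\mathbb{S}^1\}$ so that the continued disks reassemble into a genuine foliation rather than an over- or under-determined family.
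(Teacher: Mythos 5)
Your proposal is correct and follows essentially the same route as the paper: an implicit function theorem argument for the nonlinear Riemann--Hilbert problem $F(\lambda,g(\lambda))=t$, with the winding number of $F_w(\lambda,g(\lambda))$ killed by Lemma \ref{lem:winding_number} and the linearization inverted via the Hilbert transform after one real normalization at $\lambda=1$. The only cosmetic difference is that the paper parametrizes perturbations from the start as $(u+iHu)X_\xi$ with $u$ real and $Hu(1)=0$, which turns the linearized operator into multiplication by a positive function, whereas you phrase the same solvability as a Fredholm index-$1$ statement.
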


\begin{proof}
Hilbert transform
$$H:C^{1,\alpha}(\mathbb S^1)\to C^{1,\alpha}(\mathbb S^1)$$
is a bounded linear operator. We change the standard normalization $Hu(0)=0$ to $Hu(1)=0$. We denote by $C_{\mathbb R}^{1,\alpha}(\mathbb S^1)\subset C^{1,\alpha}(\mathbb S^1)$ be the subspace of real-valued functions. The curve
$\{g^{t_0}_{\xi}(\lambda), \lambda\in \mathbb S^1\}$
has winding number $0$ around zero, since
$g^{t_0}_{\xi}(\lambda)\neq 0$ for $\lambda\in \Delta$. Therefore, by Lemma \ref{lem:winding_number}, the curve
$\{F_w(\lambda,g^{t_0}_{\xi}(\lambda)):\, \lambda \in \mathbb S^1\}$ has winding number $0$
around $0$:
$$F_w(\lambda,g_{\xi}^{t_0}(\lambda))=e^{a_{\xi}(\lambda)+ib_{\xi}(\lambda)},$$
where $\alpha_{\xi}(\lambda)$, $b_{\xi}(\lambda)$ are H\"{o}lder continuous
with exponent $\alpha$. Thus, $Hb_{\xi}(\lambda)$ is H\"{o}lder
continuous as well.

$X_{\xi}(\lambda):=e^{Hb_{\xi}(\lambda)-ib_{\xi}(\lambda)}$ is a holomorphic
function on $\Delta$ and is proportional to the normal vector to
$C^t_{\lambda}$ in points $(\lambda,g^t_{\xi}(\lambda))$.

Functions of the form $(u(\lambda)+iHu(\lambda))X_{\xi}(\lambda)$ give
all holomorphic functions that are H\"{o}lder continuous up to the
boundary with the condition that $(u_{\xi}(1)+i Hu_{\xi}(1))X_{\xi}(1)$ is proportional
to the normal vector to $C_1^t$ in a point $g^t_{\xi}(1)$. There
exists an $\epsilon$ such that for each point $\eta\in C^t_1$,
$|t-t_0|<\epsilon$, there is only one normal vector that
intersects $C_1^t$ in a point $\eta$.

The space $C^0(\mathbb S^1, C^{1,\alpha}(\mathbb
S^1))$ is a Banach space with the norm
$$||u||=\sup_{\xi\in \mathbb S^1, \lambda\in \mathbb S^1} |u_{\xi}(\lambda)|+\sup_{\xi\in \mathbb S^1, \lambda\in \mathbb S^1} |u'_{\xi}(\lambda)|+\sup_{\xi\in \mathbb S^1,\lambda_1\neq \lambda_2\in \mathbb S^1}\frac{|u'_\xi(\lambda_1)-u'_{\xi}(\lambda_2)|}{|\lambda_1-\lambda_2|^{\alpha}}.$$
Consider an operator 
$${\cal F}:\mathbb R_t\times C^0(\mathbb S^1_{\xi},
C_{\mathbb R}^{1,\alpha}(\mathbb S^1_{\lambda}))\to C^0(\mathbb S_{\xi}^1,
C_{\mathbb R}^{1,\alpha}(\mathbb S_{\lambda}^1)):$$
where ${\cal F}$ is a function of two variable $(t,u_{\xi})$. We consider function $u_\xi(\lambda)$ as an element of $C^0({\Bbb S}_\xi^1,C^{1,\alpha}({\Bbb S}_\lambda^1)).$ 
$${\cal F}(t,u): {\Bbb S}^1\ni (t,\xi)\to F \left(  \lambda, g_\xi(\lambda) + (u_\xi + i H u_\xi)X_\xi(\lambda) \right)   - t  \in C^{1,\alpha}({\Bbb S}_\lambda^1) $$
For $0<\alpha<1$, $H$ is a bounded linear operator, so ${\cal F}$ is a continuous mapping of Banach spaces.  Further, when ${\cal F}$ is considered as a 
map from ${\Bbb R}\times {\Bbb S}^1_\xi$ to $C^{0,\alpha}({\Bbb S}^1_\lambda)$, it is  differentiable, and we compute the differential of ${\cal F}$ at $u_\xi=0$ in the direction $\delta u_\xi$: 
$$D{\cal F}(t,0; \delta u_\xi) = e^{a_\xi(\lambda) - Hb_\xi(\lambda)}\delta u_\xi(\lambda).$$
Since  ${\cal F}(t,0; \delta u_\xi)$ is an invertible linear operator, we can define $u^t_{\xi}$ as the unique element of $C^0({\Bbb S}_\xi^1,C^{1,\alpha}({\Bbb S}_\lambda^1))$ satisfying
${\cal F}(t, u_{\xi}^t)=0.$ 
And the function $\tilde{g}(\lambda, \xi,t)=g^{t_0}_{\xi}(\lambda)+u_{\xi}^t(\lambda)$ defines a
holomorphic transverse foliation and is of class $C^{1,\alpha}$ on
$\bar{\Delta}$. By continuity, for $\xi\neq \eta$, $g^t_{\xi}(\lambda)\neq g^{t}_{\eta}(\lambda)$ for $\lambda\in \Delta$.

\end{proof}
This also gives us the openness for one disk.

\begin{theorem}\label{te:open_leaf} Let $\Gamma^t$, $t\in I$ be a family of graphical tori. Let $g^{t_0}:\bar{\Delta}\to \mathbb C$ be a function that defines a holomorphic disk with the trace in the torus $\Gamma^{t_0}$. Assume that $g^{t_0}\in C^{1,\alpha}(\bar{\Delta})$, 
$g^{t_0}(\lambda)\neq 0$ for $\lambda\in \Delta$. 
Then there exists $\delta$ and a continuous function $g:\bar{\Delta}\times(t_0-\delta,t_0+\delta)\to \mathbb C$ such that $g^t(\lambda):=g(\lambda,t)$ defines a holomorphic disk with the trace in $\Gamma^t$ and $g^t\in C^{1,\alpha}(\bar{\Delta})$, $g^t(\lambda)\neq 0$ for $\lambda\in \Delta$.
\end{theorem}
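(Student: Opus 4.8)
The plan is to specialize the implicit-function-theorem argument of Theorem \ref{te:open} to a single leaf, simply deleting the $\xi$-dependence. A function $g^t\in C^{1,\alpha}(\bar\Delta)$, holomorphic on $\Delta$, has trace in $\Gamma^t$ exactly when its boundary values satisfy $F(\lambda, g^t(\lambda)) = t$ for $\lambda\in\mathbb S^1$, where $F:\mathbb S^1\times\mathbb C\to\mathbb R$ is the defining function with $F^{-1}(t)=\Gamma^t$ and $F_w\neq 0$ off $w=0$. I would seek the perturbed disks as small holomorphic perturbations of $g^{t_0}$ in the normal direction to the vertical slices, and solve the resulting equation by the implicit function theorem on $C^{1,\alpha}_{\mathbb R}(\mathbb S^1)$.

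To linearize, note that since $g^{t_0}(\lambda)\neq 0$ on $\Delta$ the trace has winding number zero about the origin, so by Lemma \ref{lem:winding_number} so does $\{F_w(\lambda, g^{t_0}(\lambda))\}$; taking a continuous logarithm gives $F_w(\lambda, g^{t_0}(\lambda)) = e^{a(\lambda)+ib(\lambda)}$ with $a,b\in C^\alpha(\mathbb S^1)$. As in Theorem \ref{te:open}, the function $X(\lambda):=e^{Hb(\lambda)-ib(\lambda)}$ is holomorphic, Hölder up to the boundary, and proportional to the normal to $C^{t_0}_\lambda$ at $g^{t_0}(\lambda)$, while the expressions $(u+iHu)X$ with $u\in C^{1,\alpha}_{\mathbb R}(\mathbb S^1)$ (normalized by $Hu(1)=0$) range over all holomorphic perturbations carrying the correct boundary normalization. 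Competitor disks near $g^{t_0}$ are therefore of the form $g^{t_0}+(u+iHu)X$.

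Then I would introduce
\[
\mathcal F(t,u)(\lambda) = F\bigl(\lambda,\, g^{t_0}(\lambda) + (u+iHu)(\lambda)\,X(\lambda)\bigr) - t,
\]
a map $\mathbb R\times C^{1,\alpha}_{\mathbb R}(\mathbb S^1)\to C^{1,\alpha}_{\mathbb R}(\mathbb S^1)$ that is continuous, and (considered into $C^\alpha$) differentiable, because $H$ is bounded on these Hölder spaces. We have $\mathcal F(t_0,0)=0$, and, exactly as computed for Theorem \ref{te:open}, the partial differential $D_u\mathcal F(t_0,0)$ is multiplication by the nonvanishing real-valued function $e^{a-Hb}$, hence an isomorphism of $C^{1,\alpha}_{\mathbb R}(\mathbb S^1)$. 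The implicit function theorem then produces $\delta>0$ and a continuous curve $t\mapsto u^t\in C^{1,\alpha}_{\mathbb R}(\mathbb S^1)$ with $u^{t_0}=0$ and $\mathcal F(t,u^t)=0$ for $|t-t_0|<\delta$. I expect this invertibility step (together with the mild loss-of-derivative subtlety in differentiating the superposition operator $\mathcal F$) to be the only real content, and it is identical to the computation already carried out for Theorem \ref{te:open}.

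Finally I would set $g(\lambda,t)=g^t(\lambda):=g^{t_0}(\lambda)+(u^t+iHu^t)(\lambda)X(\lambda)$. Each $g^t$ is holomorphic on $\Delta$, lies in $C^{1,\alpha}(\bar\Delta)$, and satisfies $F(\lambda,g^t(\lambda))=t$, so its trace lies in $\Gamma^t$; continuity of $t\mapsto u^t$ and of the Poisson extension give joint continuity of $g$ on $\bar\Delta\times(t_0-\delta,t_0+\delta)$. Since $g^{t_0}$ avoids $0$ on $\Delta$ and has trace in $\Gamma^{t_0}\subset\mathbb S^1\times(\mathbb C\setminus\{0\})$, it is bounded away from $0$ on the compact set $\bar\Delta$; shrinking $\delta$ so that $g^t$ stays uniformly close to $g^{t_0}$ forces $g^t(\lambda)\neq 0$ for all $\lambda\in\Delta$, completing the verification.
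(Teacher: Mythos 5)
Your proposal is correct and takes essentially the same route as the paper: the paper offers no separate argument for Theorem \ref{te:open_leaf}, deriving it as an immediate corollary of the implicit-function-theorem construction in Theorem \ref{te:open} (``This also gives us the openness for one disk''), and your write-up is exactly that argument with the $\xi$-dependence stripped out, including the same winding-number/logarithm step, the same normal-direction ansatz $(u+iHu)X$, and the same invertible multiplication operator as the linearization.
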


\subsection{Closedness}

\begin{theorem} Let $\Gamma^t$, $t\in [0, \infty)$ be a family of graphical tori.
Suppose that there exists $g:\bar{\Delta}\times \mathbb S^1\times
[0,t_0)\to \mathbb C$ that defines a holomorphic transverse
foliation of $\Gamma^t$. Then $g$ can be extended to
$g:\bar{\Delta}\times \mathbb S^1\times [0,t_0]\to \mathbb C$ that
defines a holomorphic transverse foliation.
\end{theorem}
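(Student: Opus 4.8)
The plan is to run the continuity method: we already know $\Gamma^t$ is foliated for $t<t_0$, and we must produce a holomorphic transverse foliation of $\Gamma^{t_0}$ as a limit. The ingredient that makes this work is the a priori derivative bound of Corollary~\ref{cor:apriori}, which is uniform over small perturbations of $\Gamma^{t_0}$ and hence over all $t$ close to $t_0$. So the first step is to record that every leaf $g^t_\xi$ of the given foliations, for $t$ near $t_0$, satisfies $|(g^t_\xi)'|\le C$ on $\bar{\Delta}$ with $C$ independent of $t$ and $\xi$. Together with the boundedness of the disks (by the maximum modulus principle, since the traces lie on the bounded tori $\Gamma^t$), this yields a family that is uniformly bounded and equicontinuous on $\bar{\Delta}$, and by Theorem~\ref{te:regularity} the leaves in fact enjoy a uniform $C^{1,\alpha}$ bound.

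The second step is to extract limits. To keep leaves from colliding or escaping, I would reparametrize each foliation by its crossing point with the fixed transversal $C^t_1$: each $\zeta\in C^t_1$ lies on exactly one leaf, which I call $h^t_\zeta$. Fix a sequence $t_n\uparrow t_0$ and, for each $\zeta\in C^{t_0}_1$, choose $\zeta_n\in C^{t_n}_1$ with $\zeta_n\to\zeta$. By Arzel\`a--Ascoli the disks $h^{t_n}_{\zeta_n}$ subconverge in $C^1(\bar{\Delta})$ to a holomorphic $h^{t_0}_\zeta$ with $h^{t_0}_\zeta(1)=\zeta$ and trace in $\Gamma^{t_0}$. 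Since each $h^{t_n}_{\zeta_n}$ is zero-free on $\Delta$ and the boundary values lie on $C^{t_0}_\lambda\subset\mathbb C\setminus\{0\}$, Hurwitz's theorem forces $h^{t_0}_\zeta$ to be zero-free on $\bar{\Delta}$ (it is not identically $0$). The uniform Lipschitz bound gives $(h^{t_0}_\zeta)'\in L^\infty$, so Theorem~\ref{te:regularity} upgrades $h^{t_0}_\zeta$ to $C^{1,\alpha}(\bar{\Delta})$.

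The third step, which I expect to be the main obstacle, is to verify that the limiting disks $\{h^{t_0}_\zeta\}_{\zeta\in C^{t_0}_1}$ genuinely constitute a transverse foliation: that no two leaves collide, none are lost, and together they cover $\Gamma^{t_0}$ continuously. Disjointness over the interior follows from Hurwitz applied to differences: for $t<t_0$ and distinct crossing points $h^t_\zeta-h^t_{\zeta'}$ is zero-free on $\bar{\Delta}$, so in the limit $h^{t_0}_\zeta-h^{t_0}_{\zeta'}$ is either zero-free or identically $0$; since $h^{t_0}_\zeta(1)=\zeta\ne\zeta'=h^{t_0}_{\zeta'}(1)$, it is not identically $0$, whence the leaves are disjoint. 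The cyclic order of leaves along $C^t_1$ is preserved under this disjointness, and I would exploit the order structure to obtain simultaneously (a) subsequence-independence and continuity in $\zeta$, since a one-sided limit through $(1,\zeta)$ is squeezed between the leaves on either side and is therefore unique, and (b) filling of $\Gamma^{t_0}$, since the map $\zeta\mapsto h^{t_0}_\zeta(\lambda_0)$ from $C^{t_0}_1$ to $C^{t_0}_{\lambda_0}$ is continuous, order-preserving, and of degree $1$ (it is the identity at $\lambda_0=1$, and each trace is a graph meeting $C_\lambda$ once by the transversality of Theorem~\ref{te:anlgle_estimate2}), hence a homeomorphism. This rules out both gaps and collapses. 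The uniform transversality furnished by the angle estimate is precisely what prevents the limiting traces from becoming tangent to the vertical fibers and guarantees they remain graphs; this is the technical heart. The strong uniqueness asserted in the Filling Theorem then follows from the same order argument, and openness (Theorem~\ref{te:open_leaf}) confirms that the newly constructed leaves persist, so that $t_0$ lies in the foliated parameter set.
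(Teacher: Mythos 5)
Your proposal is correct and follows essentially the same route as the paper: the uniform derivative bound of Corollary~\ref{cor:apriori} (valid for tori near $\Gamma^{t_0}$), a normal-families/Arzel\`a--Ascoli extraction of limit disks, and Theorem~\ref{te:regularity} to upgrade the limits to $C^{1,\alpha}(\bar{\Delta})$. Your third step (Hurwitz for zero-freeness and leaf disjointness, plus the order/degree argument that the limit leaves depend continuously on the parameter and still cover $\Gamma^{t_0}$) makes explicit the verification that the paper's very short proof leaves implicit, so your write-up is, if anything, more complete than the original.
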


\begin{proof} By Corollary \ref{cor:apriori}, there exists $C$ that depends only on $\Gamma^{t_0}$ so that $|\left(g_{\xi}^t\right)'|<C$ for $t<t_0$ close to $t_0$. Since the space of bounded holomorphic
functions on $\Delta$ is compact, we can pass to the limit. Let $g_{\xi}^{t_0}$ be the limits, $|\left(g^{t_0}_{\xi}\right)'|\leq C$. By Regularity Theorem \ref{te:regularity}, 
$g_{\xi}^{t_0}\in C^{1,\alpha}(\bar{\Delta})$.
\end{proof}

This also give us closedness for a family of disks.

\begin{theorem}\label{te:close_leaf} Let $\Gamma^t$, $t\in [0, \infty)$ be a family of graphical tori.
Assume that $g:\bar{\Delta}\times [0,t_0)\to \mathbb C$ is a
continuous function such that $g^t(\lambda):=g(\lambda,t)$ defines
a holomorphic disk with the trace in $\Gamma^t$, $g^t\in C^{1,\alpha}(\bar{\Delta})$, $g^t(\lambda)\neq 0$ for $\lambda\in \Delta$. Then $g$ can be extended
to a continuous $g:\bar{\Delta}\times [0,t_0]\to \mathbb C$ such
that $g^{t_0}$ defines a holomorphic disk with the trace in $\Gamma^{t_0}$, $g^{t_0}(\lambda)\neq 0$ for $\lambda\in \Delta$.
\end{theorem}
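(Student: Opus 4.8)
The plan is to reduce this single-leaf closedness statement to the same three ingredients used in the foliation version just proved: the a priori derivative estimate of Corollary~\ref{cor:apriori}, compactness of bounded holomorphic functions, and the Regularity Theorem~\ref{te:regularity}. The extension $g^{t_0}$ will be the limit of $g^t$ as $t\uparrow t_0$; the work is in producing this limit, identifying its properties, and checking that the resulting $g$ is genuinely continuous rather than merely subsequentially convergent.

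First I would apply Corollary~\ref{cor:apriori}. Since each $g^t$ is nonvanishing on $\Delta$ with trace in $\Gamma^t$, and since the derivative estimate is stable under small perturbations of the torus, there is a constant $C$ depending only on $\Gamma^{t_0}$ with $|(g^t)'(\lambda)|<C$ on $\bar\Delta$ for all $t<t_0$ close to $t_0$. The traces lie in a compact subset of $\mathbb C\setminus\{0\}$, so by the maximum principle the $g^t$ are uniformly bounded, and together with the uniform Lipschitz bound they are equicontinuous on $\bar\Delta$. Hence for any sequence $t_n\uparrow t_0$ a subsequence converges uniformly on $\bar\Delta$ to a holomorphic $g^{t_0}$ with $|(g^{t_0})'|\leq C$ on $\Delta$. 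Uniform convergence of boundary values together with $\Gamma^t\to\Gamma^{t_0}$ forces the trace of $g^{t_0}$ to lie in $\Gamma^{t_0}$; the limit is not identically zero because its trace avoids $0$, so Hurwitz's theorem gives $g^{t_0}(\lambda)\neq 0$ on $\Delta$. Then $g^{t_0}$ meets the hypotheses of the Regularity Theorem~\ref{te:regularity}, whence $g^{t_0}\in C^{1,\alpha}(\bar\Delta)$.

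The main obstacle is continuity of $g$ at $t_0$, i.e. independence of the limit from the chosen subsequence. To settle this I would invoke the local uniqueness built into the openness argument: applying the implicit function theorem of Theorem~\ref{te:open_leaf} at the limit disk $g^{t_0}$ produces, for $|t-t_0|<\delta$, a continuous family $\tilde g^t$ of disks with trace in $\Gamma^t$ that is the unique such disk in a fixed $C^{1,\alpha}$-neighborhood of $g^{t_0}$. Since $g^{t_n}\to g^{t_0}$, the original leaves $g^t$ enter this neighborhood as $t\uparrow t_0$ and hence coincide with $\tilde g^t$ there. Therefore $g^t\to g^{t_0}$ along the full parameter, any two subsequential limits agree, and $g$ extends continuously to $\bar\Delta\times[0,t_0]$ with all the stated properties.
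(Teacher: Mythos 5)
Your first two paragraphs are, in essence, the paper's own proof: the paper handles this statement (as a corollary of the foliation version of closedness) by exactly the same three ingredients --- the a priori bound of Corollary \ref{cor:apriori} for $t$ close to $t_0$, compactness of a uniformly bounded, uniformly Lipschitz family of holomorphic functions to extract a limit disk, and Theorem \ref{te:regularity} to conclude $g^{t_0}\in C^{1,\alpha}(\bar{\Delta})$. Your appeals to the maximum principle and to Hurwitz's theorem correctly fill in details the paper leaves implicit.

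The third paragraph, however, contains a genuine gap, located exactly at the point you rightly single out as the main obstacle. The implicit function theorem behind Theorems \ref{te:open} and \ref{te:open_leaf} does \emph{not} produce a disk that is unique in a $C^{1,\alpha}$-neighborhood of $g^{t_0}$: it produces the unique small solution among perturbations of the special form $g^{t_0}+(u+iHu)X$ with the normalization $Hu(1)=0$, i.e.\ among disks whose value at $\lambda=1$ is displaced along the normal line to $C^t_1$ through $g^{t_0}(1)$. Disks with trace in $\Gamma^t$ that are close to $g^{t_0}$ but violate this normalization are invisible to that uniqueness. This is not a technicality: for the standard tori $\Gamma^t=\{|w|=t\}$, every constant $c$ with $|c|=t$ close to $g^{t_0}$ defines a disk with trace in $\Gamma^t$ lying in any fixed $C^{1,\alpha}$-neighborhood of $g^{t_0}$, so the claimed uniqueness fails outright, and with it the key deduction $g^{t_n}=\tilde g^{t_n}$.

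Worse, the full convergence you are after cannot be derived from the stated hypotheses at all. Again on the standard tori, take $g^t\equiv t\,e^{i\sin(1/(t_0-t))}$ on an interval $[t_1,t_0)$ with $t_1>0$ (the hypotheses are in any case vacuous near $t=0$, where $\Gamma^0$ degenerates to a circle and forces $g^0\equiv 0$). Each $g^t$ is a nonvanishing $C^{1,\alpha}$ disk with trace in $\Gamma^t$ and the family is continuous in $t$, yet $g^t$ has no limit as $t\to t_0$: its subsequential limits sweep out the whole arc of constants $t_0e^{i\theta}$, $\theta\in[-1,1]$, so no continuous extension to $t=t_0$ exists. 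What is actually provable --- and what the paper's own terse argument establishes --- is the subsequential statement: there is a sequence $t_n\to t_0$ along which $g^{t_n}$ converges to a nonvanishing $C^{1,\alpha}$ disk with trace in $\Gamma^{t_0}$. Genuine continuity at $t_0$ requires an extra normalization killing the rotational freedom (for instance pinning $g^t(1)$, in the spirit of Theorems \ref{te:loc_uniqueness} and \ref{te:uniqueness}). So your instinct that something beyond compactness must be proved is sound, but no argument along your lines (or any other) can close it; it is the statement, not merely the proof, that has to be amended.
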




\subsection{Uniqueness} 

Let $\Gamma^c=\{(\lambda, w):\, |w|=c, |\lambda|=1\}$ be standard tori.
Let $g:\Delta\to \mathbb C$ be a function that defines a holomorphic disk with the trace of $g(\partial \Delta)\in \Gamma^c$. By the Minimum Modulus 
Theorem, $\min$ of $g$ is attained on the boundary.
Maximum modulus is attained on the boundary as well. So
$|g(\lambda)|=\mbox{const}$. Therefore, $g(\lambda)=\mbox{const}$.

\begin{theorem}\label{te:loc_uniqueness} Let $\Gamma$ be a graphical torus. Let $g,h:\bar{\Delta}\to \mathbb
C$ be functions that define holomorphic disks with traces in $\Gamma$, $g(\lambda)\neq 0$, $h(\lambda)\neq 0$ for $\lambda \in \Delta$.
Assume that $g(1)=h(1)$. Then there exists $\epsilon$ such that $|g(\lambda)-h(\lambda)|<\epsilon\, \mbox{for}\, \lambda\in \mathbb
S^1$ implies $g(\lambda)\equiv h(\lambda)$.

Note that the same $\epsilon$ works for tori close to $\Gamma$.
\end{theorem}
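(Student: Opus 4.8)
The plan is to suppose $\phi := g - h \not\equiv 0$ and derive a contradiction by computing, in two different ways, the net increment of $\arg\phi$ as $\lambda = e^{i\theta}$ runs once around $\mathbb{S}^1$. First I would record the regularity I am allowed to use: by Corollary~\ref{cor:apriori} both $g'$ and $h'$ are bounded, so by the Regularity Theorem~\ref{te:regularity} we have $g, h \in C^{1,\alpha}(\bar\Delta)$, and hence $\phi \in C^{1,\alpha}(\bar\Delta)$ is holomorphic on $\Delta$, continuous up to the boundary, with $\phi(1) = 0$. Since $\phi \not\equiv 0$, its zeros in $\bar\Delta$ are isolated; write $Z \ge 0$ for the number of interior zeros (with multiplicity), and let the boundary zeros, one of which is $\lambda = 1$, have finite orders $m_j \ge 1$ (I return to this finiteness below).

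The geometric input is an upper bound on this increment. For $\lambda \in \mathbb{S}^1$ both $g(\lambda)$ and $h(\lambda)$ lie on the smooth curve $C_\lambda$, and the family $\{C_\lambda\}$ is uniformly $C^2$; hence for $|g-h| < \epsilon$ the secant $\phi(\lambda) = g(\lambda)-h(\lambda)$ makes an angle $O(\epsilon)$ with the tangent line of $C_\lambda$ at $g(\lambda)$, whose direction I denote $\eta(\lambda)$. To convert this into a winding statement I use the holomorphic, nonvanishing function $X$ from the proof of Theorem~\ref{te:open}, for which $\arg X = -b$ on $\mathbb{S}^1$ and which is proportional to the normal of $C_\lambda$. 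Since the trace $g(\mathbb{S}^1)$ has winding number $0$ about $0$ (as $g$ is holomorphic and nonvanishing on $\Delta$), Lemma~\ref{lem:winding_number} shows $F_w(\lambda,g(\lambda))$ has winding number $0$, so $b$ is single-valued and $\Delta_{(0,2\pi)}\arg X = 0$. Consequently $G := \phi/X$ is holomorphic with the same zeros as $\phi$, and its boundary values lie in a double cone of half-angle $O(\epsilon)$ about the imaginary axis. On the connected arc $\mathbb{S}^1 \setminus \{1\}$ the continuous function $\arg G$ cannot jump between the two cone halves without producing a zero, so it remains in a single arc of length $O(\epsilon)$. This forces $T := \Delta_{(0,2\pi)}\arg\phi = \Delta_{(0,2\pi)}\arg G + \Delta_{(0,2\pi)}\arg X$ to satisfy $|T| = O(\epsilon)$.

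On the other hand, the argument principle applied on $\Delta$ with a small inward indentation around each boundary zero gives $Z = \tfrac{1}{2\pi}\bigl(T - (\textstyle\sum_j m_j)\,\pi\bigr)$: each boundary zero of order $m_j$ contributes $-m_j\pi$ through the clockwise inner arc (model $\phi \sim c(\lambda-\lambda_j)^{m_j}$). Since $Z \ge 0$ and $\sum_j m_j \ge 1$, this reads $T = 2\pi Z + (\sum_j m_j)\pi \ge \pi$, contradicting $|T| = O(\epsilon)$ once $\epsilon$ is small. Hence $\phi \equiv 0$, i.e. $g \equiv h$, which is Theorem~\ref{te:loc_uniqueness}.

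The main obstacle is the rigorous treatment of the boundary zeros: one must know that the vanishing of $\phi$ at a boundary point has a well-defined finite order with power-type leading behavior, so that the inner-arc contribution is genuinely $-m_j\pi$ (equivalently, that $C^{1,\alpha}$ regularity excludes an infinitely flat, oscillatory boundary zero). I expect to handle this by combining the $C^{1,\alpha}$ bound with the transversality estimate of Theorem~\ref{te:anlgle_estimate2}: the trace meets $C_1$ transversally, which pins the order of vanishing of $\phi$ at $\lambda = 1$ and prevents flatness. Finally, since the angle bound $\alpha$, the curvature bounds on $\{C_\lambda\}$, and the normalization $X$ all vary continuously with $\Gamma$, every estimate above is uniform over a neighborhood of $\Gamma$, so the same $\epsilon$ works for nearby tori, as claimed.
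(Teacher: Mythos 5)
Your first half tracks the paper's proof closely: you use the same geometric input (the chord $\phi(\lambda)=g(\lambda)-h(\lambda)$ joins two points of $C_\lambda$, hence is orthogonal to the averaged normal $\int_0^1 F_w\,ds$, which by Lemma \ref{lem:winding_number} has winding number zero and may be written $e^{a+ib}$ with $b$ H\"older), and you divide by the same nonvanishing holomorphic normalization $X=e^{Hb-ib}$ to get a holomorphic $G=\phi/X$ whose boundary values point along (or, in your approximate version, within $O(\epsilon)$ of) the imaginary axis. The divergence is in the endgame. The paper's identity (\ref{eq:nrh}) is \emph{exact}, so $\operatorname{Re}(\phi/X)$ is harmonic on $\Delta$, continuous on $\bar{\Delta}$, and vanishes identically on $\partial\Delta$; hence $\phi/X$ is an imaginary constant, which must be $0$ because $\phi(1)=0$ and $1/X$ is nonvanishing. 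No information about the zero set of $\phi$ is ever needed. You instead run the argument principle on an indented contour, and that is where the genuine gap sits.

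For a function that is only holomorphic on $\Delta$ and $C^{1,\alpha}$ on $\bar{\Delta}$, none of the three facts your count needs are automatic: (a) the interior zeros, though isolated in $\Delta$, may be infinite in number, accumulating on $\partial\Delta$ (infinite Blaschke-type factors); (b) the boundary zero set need not be finite, nor even consist of isolated points; (c) a boundary zero need not have a finite order with power-type leading term --- e.g.\ $\exp\bigl(-((1+\lambda)/(1-\lambda))^{\beta}\bigr)$ with $0<\beta<1$ is smooth on $\bar{\Delta}$, nonzero except at $\lambda=1$, and vanishes there to infinite order, so the ``$-m_j\pi$'' contribution of an indentation is not defined. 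Your proposed repair via Theorem \ref{te:anlgle_estimate2} does not close this: that theorem makes each individual trace uniformly transverse to the foliation of $\Gamma$ by the curves $C_\lambda$, but two curves each transverse to a foliation can be mutually tangent to arbitrary (even infinite) order at a common point, so it yields no lower bound on the order of vanishing of $g-h$ at $\lambda=1$. The repair that actually works is to upgrade your $O(\epsilon)$ cone statement to the exact relation $\operatorname{Re}(\phi/X)=0$ on all of $\partial\Delta$ (which your own setup already provides, since both traces lie on the \emph{same} level set of $F$) and then conclude by harmonicity or the open mapping theorem; this makes all of the boundary-zero bookkeeping unnecessary. Your closing remark that the constants depend continuously on $\Gamma$, so one $\epsilon$ serves for nearby tori, is fine and agrees with the paper.
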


\begin{proof}

Let $a(\lambda,s)=F(\lambda,g(\lambda)+s(h(\lambda)-g(\lambda)))$,
$a(\lambda,0)=a(\lambda, 1)=t$. Then
\begin{equation}\label{eq:nrh}
\int_0^1 a_s ds=0=Re(h(\lambda)-g(\lambda))\int_0^1 F_w
\big(g(\lambda)+s(h(\lambda)-g(\lambda))\big)ds.
\end{equation}

The winding number of the curve $\{g(\lambda):\lambda\in \mathbb
S^1\}$ around $0$ is equal to zero. Hence, by Lemma
\ref{lem:winding_number}, the winding number of 
$$
\{F_w(\lambda, g(\lambda)):\,\lambda \in \mathbb S^1\}
$$
around $0$ is
equal to zero.

Therefore, for small enough $\epsilon$, the winding number of the curve
$$\left\{\int_0^1 F_w\left(g(\lambda)+s(h(\lambda)-g(\lambda)\right)ds:\,\lambda \in \mathbb S^1 \right\}$$ around $0$ is equal to zero, so

$$
\int_0^1 F_w\big(g(\lambda)+s(h(\lambda)-g(\lambda))\big)ds=e^{a(\lambda)+ib(\lambda)}.$$

The function $b(\lambda)$ is a bounded H\"{o}lder continuous function.

By equation (\ref{eq:nrh}), $\arg \left(g(\lambda)-h(\lambda)\right)=\frac{\pi}{2}-b(\lambda)$, where $b(\lambda)$ is a bounded function. It contradicts the fact that $g(1)-h(1)=0$. 
\end{proof}

\subsection{Global Uniqueness}
\begin{theorem}\label{te:uniqueness} Let $\Gamma$ be a graphical torus. Let $g^1,h^1:\bar{\Delta}\to \mathbb
C$ be functions that define holomorphic disks with traces in $\Gamma$, $g^1, h^1\in C^{1,\alpha}(\bar{\Delta})$.
Assume that $g^1(1)=h^1(1)$. Then $g^1(\lambda)=h^1(\lambda)$.
\end{theorem}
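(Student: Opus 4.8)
The plan is to prove the statement by the continuity method in a family of graphical tori, reducing the \emph{global} uniqueness to the \emph{local} uniqueness already established in Theorem \ref{te:loc_uniqueness}. I would embed $\Gamma$ in a smooth family $\Gamma^t$, $t\in[0,1]$, with $\Gamma^1=\Gamma$ and $\Gamma^t=\{|w|^2=t\}$ the standard torus for small $t$, exactly as in the openness/closedness arguments. Let $S\subset(0,1]$ be the set of parameters at which \emph{uniqueness holds}: $t\in S$ means that any two nonzero disks $g,h\in C^{1,\alpha}(\bar\Delta)$ with traces in $\Gamma^t$ and $g(1)=h(1)$ must coincide. I would then show that $S$ is nonempty, open, and closed in $(0,1]$, so that $S=(0,1]$; in particular $1\in S$, which is precisely the assertion of the theorem applied to $\Gamma=\Gamma^1$.

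The base case is immediate: for $t\le\epsilon$ the torus $\Gamma^t$ is standard, and by the Minimum Modulus argument preceding Theorem \ref{te:loc_uniqueness} every disk with trace in $\Gamma^t$ is a constant of modulus $\sqrt{t}$, hence is determined by its single boundary value. Thus $(0,\epsilon]\subset S$. For openness of $S$ I would use a compactness-plus-local-uniqueness argument. Suppose $t_0\in S$ but there are $t_n\to t_0$ carrying distinct disks $g_n,h_n$ with traces in $\Gamma^{t_n}$ and $g_n(1)=h_n(1)$. The a priori derivative bound of Corollary \ref{cor:apriori}, which is uniform for tori near $\Gamma^{t_0}$, makes $\{g_n\}$ and $\{h_n\}$ normal families; passing to limits and applying Theorem \ref{te:close_leaf} and the Regularity Theorem \ref{te:regularity} produces nonzero disks $g_0,h_0\in C^{1,\alpha}(\bar\Delta)$ with traces in $\Gamma^{t_0}$ and $g_0(1)=h_0(1)$. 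Since $t_0\in S$, we get $g_0\equiv h_0$, so for large $n$ the disks $g_n,h_n$ are uniformly close while agreeing at $\lambda=1$; the local Theorem \ref{te:loc_uniqueness}, whose constant is uniform for nearby tori, then forces $g_n\equiv h_n$, a contradiction. Hence a whole neighborhood of $t_0$ lies in $S$.

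For closedness I would combine $t_0\in S$ with the local foliation structure of Theorem \ref{te:open}: local uniqueness together with the openness there shows that, at a level in $S$, the assignment sending a disk to its value $g(1)\in C^t_1$ is an injective local homeomorphism; by the normal-family compactness it is proper, hence a homeomorphism onto the connected curve $C^t_1$, so disks correspond bijectively to points of $C^t_1$. This bijection deforms continuously in $t$ by Theorem \ref{te:open_leaf}, so it persists through the limit, and uniqueness at the limit level follows by the same compactness argument, giving that $S$ is closed. The genuine obstacle, and the point I would spend the most care on, is coordinating the boundary values at $\lambda=1$ across \emph{different} tori $\Gamma^{t_n}$: continuing two given disks from $\Gamma^{t_0}$ to nearby levels does not automatically keep their values at $\lambda=1$ equal, so to invoke Theorem \ref{te:loc_uniqueness} one must first slide one leaf along the (uniformly equicontinuous) local foliation to match boundary values, and then derive a contradiction from the fact that the two continued leaves stay a definite distance apart while their matched counterparts collapse together.
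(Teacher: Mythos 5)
Your overall route is genuinely different from the paper's: you run an open/closed argument on the set $S$ of parameters at which uniqueness holds for \emph{all} pairs of disks, while the paper instead continues the two given disks $g^1,h^1$ themselves through the family $\Gamma^t$ (Theorems \ref{te:open_leaf}, \ref{te:close_leaf}), observes $g^t\equiv h^t$ for small $t$ (standard tori force constant disks), and propagates agreement of this one pair up to $t=1$ via Theorem \ref{te:loc_uniqueness}. Your nonemptiness and openness steps are sound: normal families from Corollary \ref{cor:apriori}, limits via Theorem \ref{te:close_leaf} and Theorem \ref{te:regularity}, and the uniformity of $\epsilon$ in Theorem \ref{te:loc_uniqueness} for nearby tori do give that $S$ is open. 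You also correctly isolate the crux that the paper leaves implicit: one must keep the values at $\lambda=1$ matched as disks move through the family.

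However, your closedness step has a genuine gap. It rests on the parenthetical claim that the foliations of $\Gamma^{t_n}$, $t_n\in S$, are \emph{uniformly} (in $n$) equicontinuous transversally, i.e.\ that leaves through nearby points of $C^{t_n}_1$ are uniformly close. You give no argument for this, and the natural compactness argument is circular: if uniform equicontinuity failed, one would take leaves $u_n,v_n$ in $\Gamma^{t_n}$ with $|u_n(1)-v_n(1)|\to 0$ but $\|u_n-v_n\|_\infty\ge\epsilon_0$, pass to limits using Corollary \ref{cor:apriori} and Theorem \ref{te:regularity}, and obtain two distinct disks in $\Gamma^{t_0}$ agreeing at $\lambda=1$ --- but excluding that is exactly the uniqueness at $t_0$ that the closedness step is supposed to prove. (Nor can a transverse modulus of continuity at $\lambda=1$ come from the QC $\lambda$-lemma, since $\lambda=1$ lies on $\partial\Delta$, where quasisymmetry constants degenerate.) There are two non-circular repairs. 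First, the paper's implicit device: in the proof of Theorem \ref{te:open}, because of the normalization $Hu(1)=0$, the continuation displaces the value $g(1)$ along the normal line to $C^{t_0}_1$ at $g(1)$, and that line depends only on the point $g(1)$, not on the rest of the disk; hence disks matched at $\lambda=1$ continue to disks matched at $\lambda=1$, so at a level $t_n\in S$ the two continuations coincide and $g\equiv h$ follows in the limit. Second, one may invoke the existence half of Filling Theorem$'$, already established at this stage: the family of foliations $g(\lambda,\xi,t)$ is jointly continuous on the compact set $\bar{\Delta}\times\mathbb S^1\times[0,1]$, and $\xi\mapsto g^t_\xi(1)$ is a continuous bijection onto $C^t_1$ with continuous inverse, which supplies precisely the uniform transverse equicontinuity your sliding argument needs. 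Without one of these inputs, your closedness argument does not close.
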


\begin{proof}
We include torus $\Gamma$ into a family of
graphical tori $\Gamma^t$, $t\in [0,1]$, $\Gamma^1=\Gamma$. By Theorems
\ref{te:open_leaf}, \ref{te:close_leaf}, there exist functions
$g,h:\bar{\Delta}\times[0,1]\to \mathbb C$ such that $g(\lambda, 1)=g^1$, $h(\lambda,1)=h^1$ and
$g^t(\lambda):=g(\lambda, t)$ define holomorphic disks with the traces in tori $\Gamma^t$, . There
exists $\epsilon$ such that for $t<\epsilon$, $\Gamma^t=\{(\lambda, w):\, |w|=t, \lambda \in \mathbb S^1_{\lambda}\}$, $t\in [0,1]$ are standard tori with uniqueness of solutions. For $t<\epsilon$, $g^t\equiv h^t$. Let $t_0=\sup \{t:\, h^t\equiv g^t\}$. If $t_0\neq 1$, then by applying Theorem \ref{te:loc_uniqueness}, we get a contradiction.
\end{proof}

At this point we have proved the Filling theorem. For Filling Theorem$'$ the only statement remains is to show that disks for $\Gamma^{t_1}$ are disjoint from disks
for $\Gamma^{t_2}$ when $t_1\neq t_2$. Suppose $D^{t_j}$ is a disk with boundary in $\Gamma^{t_j}$. If $D^{t_1}\cap D^{t_2}\neq \emptyset$, then since the traces are in 
$\Gamma^{t_1}$ and $\Gamma^{t_2}$ we will have 
$D_{\xi_1}^{t_1}\cap D_{\xi_2}^{t_1}\neq \emptyset$ for all $\xi_1,\xi_2\in \mathbb S^1$. By Filling Theorem, the disks $D_{\xi_1}^{t_1}\cap D_{\xi_2}^{t_1}=\emptyset$ for $\xi_1\neq\xi_2$. However, there is a continuous family of disks $D_{\xi}^t$, $t\in [t_1,t_2]$, which is a contradiction.

\bibliographystyle{plain}
\bibliography{ref_llemma}
\end{document}